\newtheorem{theorem}{Theorem}[section]
\newtheorem{lemma}{Lemma}[section]
\newtheorem{remark}{Remark}[section]
\theoremstyle{plain}
\numberwithin{equation}{section}
\begin{document}
\date{}

\title{Wave propagation for a discrete diffusive vaccination epidemic model with bilinear incidence
}
\author{
{Ran Zhang}\\
Department of Mathematics\\ Nanjing University of Aeronautics and Astronautics\\ Nanjing 211106, China\\
{Shengqiang Liu\thanks{Corresponding Author: sqliu@tiangong.edu.cn}}\\
School of Mathematical Sciences, Tiangong University\\ Tianjin 300387, China
}
\maketitle

\baselineskip=14pt

\begin{abstract}
The aim of the current paper is to study the existence of traveling wave solutions (TWS) for a vaccination epidemic model with bilinear incidence. The existence result is determined by the basic reproduction number $\Re_0$. More specifically, the system admits a nontrivial TWS when $\Re_0>1$ and $c \geq \mathfrak{c}^*$, where $\mathfrak{c}^*$ is the critical wave speed. We also found that the TWS is connecting two different equilibria by constructing Lyapunov functional. Lastly, we give some biological explanations from the perspective of epidemiology.
\end{abstract}
{\bf Mathematics Subject Classification}. 35K57, 35C07, 92D30.\\
{\bf Keywords.} Traveling wave solution, Vaccination model, Schauder's fixed point theorem, Lattice dynamical system, Lyapunov functional.


\section{Introduction}\label{sec:Inc}
\def\d {{\rm d}}
Vaccination is critical for the prevention and control of infectious diseases, there are more than 20 life-threatening diseases could be prevented by vaccines up to now.
Vaccinators can achieve immunity by having the immune system recognize foreign substances, antibodies are then screened and generated to produce antibodies against the pathogen or similar pathogen, and then giving the injected individual a high level of disease resistance.
In \cite{LiuTakeuchiShingoJTB2008}, Liu et al. proposed the following system with continuous vaccination strategy:
\begin{equation}
\label{PrModel1}\left\{
\begin{array}{l}
\vspace{2mm}
\displaystyle   \frac{{\rm d} S(t)}{{\rm d} t} = \Lambda - \beta_1 S(t)I(t)  - \alpha S(t) - \mu S(t),\\
\vspace{2mm}
\displaystyle   \frac{{\rm d} V(t)}{{\rm d} t} = \alpha S(t) - \beta_2 V(t)I(t)  - (\gamma_1 + \mu) V(x,t),\\
\vspace{2mm}
\displaystyle   \frac{{\rm d} I(t)}{{\rm d} t} = \beta_1 S(t)I(t) + \beta_2 V(t)I(t) - \gamma I(x,t) - \mu I(x,t),\\
\displaystyle   \frac{{\rm d} R(t)}{{\rm d} t} = \gamma_1V(t) + \gamma I(t)- \mu R(t),
\end{array}\right.
\end{equation}
where $S(t),\ V(t), \ I(t)$ and $R(t)$ are the densities of susceptible, vaccinated, infective and removed individuals at time $t$, respectively.
The parameters of model (\ref{PrModel1}) are biologically explained as in Table \ref{tab}.
\begin{table}[h]
\centering
\begin{tabular}{cl}
\hline
Parameter & \hspace{0.5cm}Interpretation \\
\hline\hline
$\Lambda$ &  Recruitment rate \\
$\beta_1$ &  Disease transmission rate between infectious and susceptible individuals \\
$\beta_2$ &  Disease transmission rate between infectious and vaccinated individuals \\
$\alpha$ &  The vaccinated rate \\
$\mu$ &  Natural death rate \\
$\gamma$ &  Recovery rate \\
$\gamma_1$ &  Rate at which a vaccinating individual obtains immunity \\
\hline \\
\end{tabular}
\caption{Biological meaning of parameters in model (\ref{PrModel1}).} 
\label{tab}
\end{table}
In \cite{LiuTakeuchiShingoJTB2008}, the authors shown that the disease-free equilibrium for model (\ref{PrModel1}) is globally asymptotically stable (GAS) if the basic reproduction number is less than one, while if the number is greater than one, then a positive endemic equilibrium exists which is GAS.
Since then, the epidemic models with vaccination have attracted the attention of many scholars.
Kuniya \cite{KuniyaNARWA2013} extended the study in \cite{LiuTakeuchiShingoJTB2008} to a multi-group case, and then studied the global stability by using the graph-theoretic approach and Lyapunov method.
Considering the effect of age, three vaccination epidemic models with age structure are proposed in \cite{DuanYuanLiAMC2014,WangZhangKuniyaIMAJAM2016,WangGuoLiuIMAJAM2017}, and the global stabilities are studied.
For more recent studies on the vaccination epidemic models, we refer to \cite{WangWangZhangMMAS2022,HuoJAAC} and the references therein.

With the increasing trend of globalization and mobility of people, the spatial structure of human density and location has a significant impact on the spread of diseases. It is necessary to investigate the role of diffusion in the epidemic modeling. Mathematically, Laplacian operator in the reaction-diffusion systems could can be well used to study the infectious disease model with diffusion, since it could describe the random diffusion of each individual in the adjacent space. On the other hand, nonlocal operator could describe the long range diffusion on the whole habitat \cite{LiLiYangAMC2014}.
In the study of local and nonlocal diffusive epidemic models, there is a solution called traveling wave solution (TWS). Viewing from infectious diseases perspective, the existence of TWS for epidemic model implies that the disease can be invaded \cite{LiLiLinCPAA2015}.
Up to now, there have been many studies on the TWS for local and nonlocal diffusive epidemic models (see, for example, \cite{Hosono,DucrotMagalNon2011,WangXuJAAC2014,WangMa2017,XuLiLinJDE2019,ZhangLiuFengJinAML2021}).
By considering both vaccination and spatial diffusion, Xu et al. \cite{XuXuHuangCMA2018} studied a local diffusive SVIR model, where the global dynamics on bounded domain and TWS on unbounded domain for the model were studied.
Meanwhile, the problem of TWS for two different SVIR models with nonlocal diffusion were investigated in \cite{ZhangLiuMBE2019,ZhouYangHsuDCDSB2020}.

Unlike local and nonlocal diffusive, there is another diffusion in infectious disease modeling, which is discrete diffusion. In fact, epidemic model with discrete diffusion can be regarded as lattice system, such system is better to describe the epidemic model with patch structure \cite{SanHeCPAA2021}. Recently, Chen et al. \cite{ChenGuoHamelNon2017} proposed a lattice SIR epidemic model:
\begin{equation}
\label{PreModel}\left\{
\begin{array}{l}
\vspace{2mm}
\displaystyle   \frac{\d S_n}{\d t}= [S_{n+1} + S_{n-1} - 2S_n] + \mu -  \beta S_nI_n - \mu S_n,\\
\displaystyle   \frac{\d I_n}{\d t}= d[I_{n+1} + I_{n-1} - 2I_n] + \beta S_nI_n  - (\gamma+\mu) I_n,
\end{array}\right.
\end{equation}
where $n\in\mathbb{Z}$. $S_n$ and $I_n$ denote densities of susceptible and infectious individuals at time $t$ and niche $n$. $\beta$ is the disease transmission rate. $1$ (normalized) and $d$ denote the random migration parameters for each compartments.
Chen et al. shown that system (\ref{PreModel}) admits TWS when $\Re_0>1$ and $c\geq c^*$. More recently, the TWS for (\ref{PreModel}) was proved to be converged to the endemic equilibrium by Zhang et al \cite{ZhangLiuDCDSB2021,ZhangWangLiuJNS2021}.
Model (\ref{PreModel}) is an SIR model with constant recruitment (i.e. the constant $\Lambda$), and
the existence of TWS for the discrete diffusive epidemic model without constant recruitment was studied in \cite{FuGuoWuJNCA2016,ZhangWuIJB2019,ZhangYuMMAS2021}. However, to our best knowledge, there are only a few studies focus on the problem of TWS for discrete diffusive epidemic models, especially for the model with constant recruitment.

Based on the above facts, in order to study the role of vaccination and patch structure in the disease modeling, we consider a discrete diffusive vaccination model as follows
\begin{equation}
\label{Model}\left\{
\begin{array}{l}
\vspace{2mm}
\displaystyle   \frac{\d S_n}{\d t}= [S_{n+1} - 2S_n + S_{n-1} ] + \Lambda -  \beta_1S_nI_n  - \alpha S_n - \mu S_n,\\
\vspace{2mm}
\displaystyle   \frac{\d V_n}{\d t}= [V_{n+1} - 2V_n + V_{n-1} ] + \alpha S_n -  \beta_2V_nI_n - \gamma_1 V_n - \mu V_n,\\
\vspace{2mm}
\displaystyle   \frac{\d I_n}{\d t}= d[I_{n+1} - 2I_n + I_{n-1} ] + \beta_1S_nI_n + \beta_2V_nI_n - \gamma I_n - \mu I_n,\\
\displaystyle   \frac{\d R_n}{\d t}= [R_{n+1} - 2R_n + R_{n-1} ] + \gamma_1 V_n + \gamma I_n - \mu R_n,
\end{array}\right.
\end{equation}
where $S_n$, $V_n$, $I_n$ and $R_n$ denote susceptible, vaccinated, infectious and removed individuals.
$d$ is the spatial motility of infectious individuals and the diffusive rate of other compartments are normalized to be 1. The biological significance of the parameters of (\ref{Model}) are the same as those in (\ref{PrModel1}).

The current paper devotes to study the existence of TWS for system (\ref{Model}) with bilinear incidence.
In fact, there are very few studies on TWS for the epidemic model with bilinear incidence and the main difficulty is the boundedness of TWS \cite{SanHeCPAA2021}.
On the other hand, introducing the constant recruitment (i.e. $\Lambda$ in model (\ref{Model})) will bring much more complexity in mathematical analysis than the system without constant recruitment.
Moreover, it is difficult to obtain the behaviour of TWS at $+\infty$ for such model (see, for example, \cite{ChenGuoHamelNon2017}).
One motivation of this paper is to show the convergence of TWS for lattice epidemic model (\ref{Model}).
To gain this purpose,
we will construct an appropriate Lyapunov functional for the wave form equations corresponding to lattice dynamical system (\ref{Model}).
To do this, we prove the persistence of TWS, which is crucial to guarantee the Lyapunov functional has a lower bound.
We should be point out that, for different models, the construction of Lyapunov functional is also different and requires technique.
Biologically, since the vaccination has an effect of decreasing the basic reproduction number in \cite{LiuTakeuchiShingoJTB2008}, we want to study how vaccination affects the speed of TWS.

The organization of this paper is as follows. In section \ref{Sec:Pre}, we give some preliminaries results. Section \ref{Sec:Existence} devote to study the existence of TWS of system (\ref{Model}) by applying Schauder's fixed point theorem. In Section \ref{Sec:Bound}, we show the boundedness of TWS. Furthermore, we show the convergence of TWS in Section \ref{Sec:Lyapunov}. Finally, there is a brief discussion and some explanations from the perspective of epidemiology will be given in Section \ref{Sec:Dis}.

\section{Preliminaries}\label{Sec:Pre}
Firstly, the corresponding ODE system for (\ref{Model}) is
\begin{equation}
\label{ODEModel}\left\{
\begin{array}{l}
\vspace{2mm}
\displaystyle   \frac{\d S}{\d t}= \Lambda -  \beta_1SI - \mu_1 S,\\
\vspace{2mm}
\displaystyle   \frac{\d V}{\d t}= \alpha S -  \beta_2VI  - \mu_2 V,\\
\displaystyle   \frac{\d I}{\d t}= \beta_1SI + \beta_2VI  - \mu_3 I,
\end{array}\right.
\end{equation}
where $R$ equation is decoupled from other equations.
Clearly, system (\ref{ODEModel}) has a disease-free equilibrium $E_0 = (S_0,V_0,0) = \left(\frac{\Lambda}{\mu_1}, \frac{\Lambda\alpha}{\mu_1\mu_2}, 0\right)$.
Define
\begin{align}\label{R_0}
\Re_{0} = \frac{\beta_1S_0+\beta_2V_0}{\mu_3}
\end{align}
as the basic reproduction number.
The well known results for (\ref{ODEModel}) is
\begin{theorem}\cite{LiuTakeuchiShingoJTB2008}\label{Th13}
For system (\ref{ODEModel}), if $\Re_0<1$, $E_0$ is globally asymptotically stable; if $\Re_0>1$, system (\ref{ODEModel}) has a globally asymptotically stable positive equilibrium $E^* = (S^*,V^*,I^*)$ satisfy
\begin{equation}
\label{EE}\left\{
\begin{array}{l}
\vspace{2mm}
\displaystyle   \Lambda -  \beta_1S^*I^* - \mu_1 S^*=0,\\
\vspace{2mm}
\displaystyle   \alpha S^* -  \beta_2V^*I^*  - \mu_2 V^*=0,\\
\displaystyle   \beta_1S^*I^* + \beta_2V^*I^*  - \mu_3 I^*=0.
\end{array}\right.
\end{equation}
\end{theorem}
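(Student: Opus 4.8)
The plan is to settle well-posedness and the existence of $E^*$ first, then treat the two regimes separately. A standard comparison argument shows the nonnegative octant is positively invariant and that $\limsup_{t\to\infty}S(t)\le S_0$ (from $\dot S\le\Lambda-\mu_1 S$), hence $\limsup_{t\to\infty}V(t)\le V_0$ (from $\dot V\le\alpha S-\mu_2 V$), after which the total population is ultimately bounded; this yields a compact, forward-invariant absorbing set carrying all $\omega$-limit sets. For $E^*$ I would solve \eqref{EE}: the first two equations give $S^*=\Lambda/(\beta_1 I^*+\mu_1)$ and $V^*=\alpha S^*/(\beta_2 I^*+\mu_2)$, and substituting into $\beta_1 S^*+\beta_2 V^*=\mu_3$ (which follows from the third equation since $I^*\neq0$) reduces everything to a scalar equation $F(I^*)=\mu_3$. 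As $F$ is strictly decreasing on $[0,\infty)$ with $F(0)=\beta_1 S_0+\beta_2 V_0=\mu_3\Re_0$ and $F(\infty)=0$, a unique positive root exists precisely when $\Re_0>1$.

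For the disease-free case $\Re_0<1$ I would argue by a chain of comparisons. The bounds above give, for every small $\varepsilon>0$ and all large $t$, the inequality $\dot I\le\big(\beta_1(S_0+\varepsilon)+\beta_2(V_0+\varepsilon)-\mu_3\big)I$. Since $\beta_1 S_0+\beta_2 V_0=\mu_3\Re_0<\mu_3$, the bracket is negative for $\varepsilon$ small, so $I(t)\to0$ exponentially. The limiting system for $(S,V)$ is then the cooperative linear system $\dot S=\Lambda-\mu_1 S$, $\dot V=\alpha S-\mu_2 V$ whose unique equilibrium $(S_0,V_0)$ is globally stable, and the theory of asymptotically autonomous systems forces $(S,V)\to(S_0,V_0)$. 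Hence $E_0$ is globally asymptotically stable.

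For the endemic case $\Re_0>1$ I would construct a Volterra-type Lyapunov function
\[
U=c_1\Big(S-S^*-S^*\ln\tfrac{S}{S^*}\Big)+c_2\Big(V-V^*-V^*\ln\tfrac{V}{V^*}\Big)+c_3\Big(I-I^*-I^*\ln\tfrac{I}{I^*}\Big),
\]
with weights $c_1,c_2,c_3>0$ to be fixed. Differentiating along trajectories and eliminating $\Lambda$, $\alpha S^*$ and $\mu_3$ via \eqref{EE}, each diagonal block produces a negative definite piece of the form $-c_i\mu_i(\cdot-\cdot)^2/(\cdot)$ together with remainders of the type $1-x+\ln x\le0$; the weights are then chosen so that the indefinite cross terms generated by the incidences $\beta_1 SI$ and $\beta_2 VI$ cancel, giving $\dot U\le0$ with equality only at $E^*$, and LaSalle's invariance principle delivers global asymptotic stability of $E^*$.

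I expect the construction of this Lyapunov function to be the main obstacle. The difficulty is the coupling term $\alpha S$ in the $V$-equation, which is absent in a textbook SIR model: because the vaccinated class is fed by $S$ rather than being self-contained, its block does not close into clean $g(x)=x-1-\ln x$ terms, and one must choose $c_1,c_2,c_3$ so that the residual $\alpha S^*$-contributions recombine, bounding the leftover logarithmic remainders by an arithmetic--geometric--mean inequality. Verifying that such weights exist and that the resulting expression is genuinely sign-definite is the technical heart of the argument.
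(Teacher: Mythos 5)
The paper offers no proof of this statement: Theorem \ref{Th13} is quoted verbatim from \cite{LiuTakeuchiShingoJTB2008} and used as a black box, so there is no argument of the authors' to compare yours against line by line. Your sketch is the standard route and is correct in outline: the absorbing-set bounds, the reduction of \eqref{EE} to a strictly decreasing scalar function $F$ with $F(0)=\mu_3\Re_0$, the comparison-plus-asymptotic-autonomy argument for $\Re_0<1$, and a Volterra-type Lyapunov function for $\Re_0>1$ are all exactly what is needed. The one point you flag as the ``technical heart'' resolves more cleanly than you anticipate: no tuning of weights is required, $c_1=c_2=c_3=1$ works. The trick for the $\alpha S$ coupling is to write $\alpha S=\alpha S^{*}\cdot(S/S^{*})$ and substitute $\alpha S^{*}=\beta_2V^{*}I^{*}+\mu_2V^{*}$ from \eqref{EE}; the $V$-block then closes into $\mu_2V^{*}\bigl(3-\tfrac{S^{*}}{S}-\tfrac{V}{V^{*}}-\tfrac{SV^{*}}{S^{*}V}\bigr)\le 0$ together with $g$-terms whose logarithms telescope, while the leftover $\alpha S^{*}(2-S/S^{*}-S^{*}/S)$ from the $S$-block is absorbed into the same expressions. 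You need not reconstruct this from scratch: the quantity $\Sigma(\varsigma)$ in \eqref{Theta} and the displayed decomposition immediately after it in Section \ref{Sec:Lyapunov} are precisely $\dot U$ for the ODE system with unit weights (modulo the typo $1-V^{*}/S$ for $1-V^{*}/V$), so the identity you need is already verified in the paper.

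Two small gaps remain in your write-up. First, for $\Re_0<1$ you establish global attractivity of $E_0$ but not Lyapunov stability; a one-line check of the Jacobian at $E_0$ (upper-triangular block structure, eigenvalues $-\mu_1$, $-\mu_2$, $\beta_1S_0+\beta_2V_0-\mu_3<0$) closes this. Second, for $\Re_0>1$ the function $U$ is only defined on the open octant and blows up as $I\to 0^{+}$, so ``globally asymptotically stable'' must be read as global with respect to initial data with $I(0)>0$; you should either state this restriction or add a uniform-persistence step guaranteeing that the $\omega$-limit set lies in the region where $U$ is proper, before invoking LaSalle.
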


Now, we state our purpose of the current paper.
Letting $\varsigma=n+ct$ in system (\ref{Model}), where $c$ is wave speed, we arrive at
\begin{equation}
\label{WaveEqu}\left\{
\begin{array}{l}
\vspace{2mm}
\displaystyle   cS'(\varsigma)= \digamma[S](\varsigma) + \Lambda - \mu_1 S(\varsigma) - \beta_1S(\varsigma)I(\varsigma),\\
\vspace{2mm}
\displaystyle   c V'(\varsigma)= \digamma[V](\varsigma) + \alpha S(\varsigma) - \beta_2V(\varsigma)I(\varsigma) - \mu_2  V(\varsigma),\\
\displaystyle   c I'(\varsigma)= d\digamma[I](\varsigma) + \beta_1S(\varsigma)I(\varsigma) + \beta_2V(\varsigma)I(\varsigma) - \mu_3  I(\varsigma)
\end{array}\right.
\end{equation}
for all $\varsigma\in \mathbb{R}$, where $\digamma[\chi](\varsigma)\triangleq  \chi(\varsigma+1) - 2 \chi(\varsigma) +  \chi(\varsigma-1) $, $\mu_1 = \alpha + \mu$, $\mu_2 = \gamma_1 + \mu$ and $\mu_3 = \gamma + \mu$.
We want to find TWS satisfying:
\begin{equation}\label{Bound1}
\lim_{\varsigma\rightarrow-\infty}(S(\varsigma),  V(\varsigma),  I(\varsigma))=(S_0, V_0, 0),
\end{equation}
and
\begin{equation}\label{Bound2}
\lim_{\varsigma\rightarrow+\infty}(S(\varsigma),  V(\varsigma),  I(\varsigma))=(S^*, V^*, I^*).
\end{equation}

\subsection{Eigenvalue problem}
Linearizing the third equation of (\ref{WaveEqu}) at the $E_0$ yields
\begin{equation}
\label{LinearModel}
c I'(\varsigma)= d\digamma[ I](\varsigma) - \mu_3  I(\varsigma) + (\beta_1 S_0 + \beta_2 V_0) I(\varsigma).
\end{equation}
Let $ I(\varsigma)=e^{\mathfrak{r} \varsigma}$, we have
\begin{equation}\label{ODE}
d[e^\mathfrak{r} + e^{-\mathfrak{r}} - 2] - c\mathfrak{r} + (\beta_1 S_0 + \beta_2 V_0) - \mu_3 = 0.
\end{equation}
Denote
\begin{equation}\label{Delta}
\Delta(\mathfrak{r},c) = d[e^\mathfrak{r} + e^{-\mathfrak{r}} - 2] - c\mathfrak{r} + (\beta_1 S_0 + \beta_2 V_0) - \mu_3.
\end{equation}
By some calculations, for $\mathfrak{r}>0$ and $c>0$, we have
\begin{align*}
&\Delta(0,c) = (\beta_1 S_0 + \beta_2 V_0) - (\mu+\gamma),\ \ \ \lim_{c\rightarrow+\infty} \Delta (\mathfrak{r},c) = -\infty,\\
&\frac{\partial \Delta(\mathfrak{r}, c)}{\partial\mathfrak{r}} = d [e^\mathfrak{r} - e^{-\mathfrak{r}}] - c,\ \ \ \frac{\partial \Delta(\mathfrak{r}, c)}{\partial c}= -\mathfrak{r} < 0,\\
&\frac{\partial^2 \Delta(\mathfrak{r}, c)}{\partial\mathfrak{r}^2} = d [e^\mathfrak{r} + e^{-\mathfrak{r}}] > 0,\ \ \ \frac{\partial \Delta(\mathfrak{r}, c)}{\partial\mathfrak{r}}\bigg|_{(0,c)} = -c < 0.\\
\end{align*}
Therefore,
\begin{lemma}\label{WaveSpeed}
Let $\Re_{0}>1.$ There exist $\mathfrak{c}^*>0$ and $\mathfrak{r}^*>0$ such that
\[
\frac{\partial \Delta(\mathfrak{r}, c)}{\partial \mathfrak{r}}\bigg|_{(\mathfrak{r}^*,\mathfrak{c}^*)} = 0\ \ \textrm{and}\ \ \Delta(\mathfrak{r}^*,\mathfrak{c}^*) = 0.
\]
Furthermore,
\begin{description}
  \item[(i)]  $\Delta(\mathfrak{r}, c)>0$ for all $\mathfrak{r}$ if $0<c<\mathfrak{c}^*$;
  \item[(ii)] $\Delta(\mathfrak{r}, c)=0$ has only one positive real root $\mathfrak{r}^*$ if $c=\mathfrak{c}^*$;
  \item[(iii)] $\Delta(\mathfrak{r}, c)=0$ has two positive real roots $\mathfrak{r}_1,$ $\mathfrak{r}_2$ with $\mathfrak{r}_1<\mathfrak{r}^*<\mathfrak{r}_2$ if $c>\mathfrak{c}^*$.
\end{description}
\end{lemma}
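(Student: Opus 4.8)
The plan is to treat $\Delta(\cdot,c)$, for each fixed $c>0$, as a strictly convex and coercive function of $\mathfrak{r}$, so that its behaviour is governed entirely by its unique minimizer, and then to track the minimum value as a function of $c$ in order to pin down $\mathfrak{c}^*$ and to separate the three regimes. First I would record that $\partial^2_\mathfrak{r}\Delta=d(e^\mathfrak{r}+e^{-\mathfrak{r}})>0$ makes $\Delta(\cdot,c)$ strictly convex on $\mathbb{R}$, while the term $de^\mathfrak{r}$ (resp.\ $de^{-\mathfrak{r}}$) dominates the linear term $-c\mathfrak{r}$ as $\mathfrak{r}\to+\infty$ (resp.\ $\mathfrak{r}\to-\infty$), forcing $\Delta(\mathfrak{r},c)\to+\infty$ at both ends. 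Hence $\Delta(\cdot,c)$ has a unique global minimizer $\mathfrak{r}_m(c)$ solving $\partial_\mathfrak{r}\Delta=d(e^\mathfrak{r}-e^{-\mathfrak{r}})-c=0$, i.e.\ $2d\sinh\mathfrak{r}_m(c)=c$; thus $\mathfrak{r}_m(c)>0$ for every $c>0$ and $\mathfrak{r}_m$ is strictly increasing in $c$.

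Next I would introduce the minimum-value function $g(c):=\Delta(\mathfrak{r}_m(c),c)=\min_{\mathfrak{r}\in\mathbb{R}}\Delta(\mathfrak{r},c)$. Differentiating and using $\partial_\mathfrak{r}\Delta=0$ at $\mathfrak{r}_m(c)$ (the envelope property) gives $g'(c)=\partial_c\Delta|_{(\mathfrak{r}_m(c),c)}=-\mathfrak{r}_m(c)<0$, so $g$ is continuous and strictly decreasing. The boundary values are $g(0^+)=\Delta(0,0)=(\beta_1S_0+\beta_2V_0)-\mu_3>0$, which is exactly where the hypothesis $\Re_0>1$ enters, and $g(c)\le\Delta(\mathfrak{r}_0,c)\to-\infty$ as $c\to+\infty$ for any fixed $\mathfrak{r}_0>0$ (as already computed). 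By the intermediate value theorem there is then a unique $\mathfrak{c}^*>0$ with $g(\mathfrak{c}^*)=0$; setting $\mathfrak{r}^*:=\mathfrak{r}_m(\mathfrak{c}^*)$ yields $\partial_\mathfrak{r}\Delta(\mathfrak{r}^*,\mathfrak{c}^*)=0$ and $\Delta(\mathfrak{r}^*,\mathfrak{c}^*)=0$, which is the existence assertion.

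The three regimes then follow from the strict monotonicity of $g$. For $0<c<\mathfrak{c}^*$ one has $g(c)>g(\mathfrak{c}^*)=0$, so $\Delta(\mathfrak{r},c)\ge g(c)>0$ for all $\mathfrak{r}$, giving (i). For $c=\mathfrak{c}^*$ the minimum value is exactly $0$, attained only at $\mathfrak{r}^*$ by strict convexity, so $\mathfrak{r}^*$ is the unique zero, giving (ii). For $c>\mathfrak{c}^*$ we have $g(c)<0$; together with $\Delta(0,c)=(\beta_1S_0+\beta_2V_0)-\mu_3>0$ and $\Delta(\mathfrak{r},c)\to+\infty$ as $\mathfrak{r}\to+\infty$, strict convexity and the intermediate value theorem produce exactly two positive roots $\mathfrak{r}_1<\mathfrak{r}_2$, with $\Delta(\cdot,c)<0$ precisely on $(\mathfrak{r}_1,\mathfrak{r}_2)$. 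To position $\mathfrak{r}^*$, I would use $\partial_c\Delta=-\mathfrak{r}$ to compute $\Delta(\mathfrak{r}^*,c)=\Delta(\mathfrak{r}^*,\mathfrak{c}^*)-\mathfrak{r}^*(c-\mathfrak{c}^*)=-\mathfrak{r}^*(c-\mathfrak{c}^*)<0$, which places $\mathfrak{r}^*$ inside $(\mathfrak{r}_1,\mathfrak{r}_2)$ and gives $\mathfrak{r}_1<\mathfrak{r}^*<\mathfrak{r}_2$, completing (iii).

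I expect the argument to be largely soft once convexity is in hand; the two places needing care are confirming that the minimizer is global over all of $\mathbb{R}$ (so that conclusion (i) genuinely holds for \emph{every} $\mathfrak{r}$, not merely for $\mathfrak{r}>0$), and the final placement of $\mathfrak{r}^*$ strictly between the two roots in (iii). For the latter I prefer to read off the sign of $\Delta(\mathfrak{r}^*,c)$ directly from the monotonicity $\partial_c\Delta<0$, rather than comparing $\mathfrak{r}^*$ with the shifted minimizer $\mathfrak{r}_m(c)$, since the sign argument is shorter and avoids tracking how $\mathfrak{r}_m(c)$ moves with $c$.
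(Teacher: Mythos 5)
Your proof is correct and rests on exactly the ingredients the paper computes before stating the lemma (strict convexity $\partial^2_{\mathfrak{r}}\Delta>0$, the sign $\partial_c\Delta=-\mathfrak{r}<0$, $\Delta(0,c)=\mu_3(\Re_0-1)>0$, and the limits at infinity); the paper itself simply lists these facts and writes ``Therefore,'' so your write-up is the same approach carried out in full, with the minimum-value function $g(c)$ and the envelope identity $g'(c)=-\mathfrak{r}_m(c)$ supplying the logical glue, and your sign computation $\Delta(\mathfrak{r}^*,c)=-\mathfrak{r}^*(c-\mathfrak{c}^*)<0$ cleanly settling the placement $\mathfrak{r}_1<\mathfrak{r}^*<\mathfrak{r}_2$ in (iii).
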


\subsection{Sub- and super-solutions}
Fix $c>\mathfrak{c}^*$ and $\Re_0>1$, we show the following lemma.

\begin{lemma}\label{UpLow}
For $\varepsilon_i>0$ small enough and $M_i>0$ $(i=1,2,3)$ large enough, we define the following six functions:
\begin{equation*}
\left\{
\begin{array}{l}
\displaystyle   S^+(\varsigma)=S_0,\\
\displaystyle   V^+(\varsigma)=V_0,\\
\displaystyle   I^+(\varsigma) = e^{\mathfrak{r}_1 \varsigma},
\end{array}
\right.
\ \ \ \ \ \
\left\{
\begin{array}{l}
\displaystyle   S^-(\varsigma)=\max\{S_0(1-M_1 e^{\varepsilon_1 \varsigma}),0\},\\
\displaystyle   V^-(\varsigma)=\max\{V_0(1-M_2 e^{\varepsilon_2 \varsigma}),0\},\\
\displaystyle   I^-(\varsigma)=\max\{e^{\mathfrak{r}_1\varsigma}(1-M_3e^{\varepsilon_3 \varsigma}),0\}.
\end{array}\right.
\end{equation*}
Then they satisfy
\begin{equation}\label{up}
\left\{
\begin{array}{l}
c{S^+}'(\varsigma) \geq \digamma[S^+] + \Lambda - \mu_1 S^+ - \beta_1 S^+I^-,\qquad\ \\
c{V^+}'(\varsigma) \geq \digamma[V^+] + \alpha S^+ - \beta_2 V^+I^- - \mu_2 V^+,\qquad\quad\ \ \\
c {I^+}'(\varsigma) \geq d\digamma[I^+] + \beta_1 S^+I^+ + \beta_2 V^+I^+ - \mu_3  I^+,
\end{array}\right.
\end{equation}
and
\begin{subequations}\label{low}
\begin{numcases}{}
\label{S}
c{S^-}'(\varsigma) \leq \digamma[S^-] + \Lambda - \mu_1 S^- - \beta_1 S^-I^+,\ \ \ \ \ \ \ \ \  \ \varsigma\neq \varepsilon_1^{-1}\ln M_1^{-1}:=\mathfrak{X}_1,\\
\label{V}
c {V^-}'(\varsigma) \leq \digamma[V^-] + \alpha S^- - \beta_2  V^-I^+ - \mu_2  V^-,\ \ \ \ \ \ \varsigma\neq \varepsilon_2^{-1}\ln M_2^{-1}:=\mathfrak{X}_2,\\
\label{I}
c {I^-}'(\varsigma) \leq d\digamma[I^-] + \beta_1 S^-I^- + \beta_2 V^-I^- - \mu_3  I^-,\ \ \ \varsigma \neq \varepsilon_3^{-1}\ln M_3^{-1}:=\mathfrak{X}_3.
\end{numcases}
\end{subequations}
\end{lemma}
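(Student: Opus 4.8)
The plan is to verify each of the six differential inequalities in \eqref{up} and \eqref{low} directly, exploiting the equilibrium identities $\Lambda=\mu_1 S_0$, $\alpha S_0=\mu_2 V_0$ together with the characteristic identity $\Delta(\mathfrak{r}_1,c)=0$ from Lemma \ref{WaveSpeed}. The three upper inequalities are essentially immediate. Since $S^+\equiv S_0$ and $V^+\equiv V_0$ are constant, $\digamma[S^+]=\digamma[V^+]=0$ and ${S^+}'={V^+}'=0$, so the first two reduce to $0\geq -\beta_1 S_0 I^-$ and $0\geq-\beta_2 V_0 I^-$, which hold because $I^-\geq0$. For $I^+=e^{\mathfrak{r}_1\varsigma}$ a direct substitution turns the third inequality into $e^{\mathfrak{r}_1\varsigma}\bigl(-\Delta(\mathfrak{r}_1,c)\bigr)\geq0$, i.e. an equality, by the very definition of $\mathfrak{r}_1$.

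For the lower solutions I would split $\mathbb{R}$ at each corner $\mathfrak{X}_i$. On $\{\varsigma>\mathfrak{X}_i\}$ the relevant lower solution vanishes and its derivative is $0$, so each inequality collapses to a statement of the form $0\leq \digamma[\,\cdot\,]+(\text{nonnegative terms})$; since $\digamma[\chi](\varsigma)=\chi(\varsigma+1)+\chi(\varsigma-1)\geq0$ whenever $\chi(\varsigma)=0$ and $\chi\geq0$, and since $\Lambda>0$, $\alpha S^-\geq0$, these hold trivially. On $\{\varsigma<\mathfrak{X}_i\}$ the lower solution equals its smooth defining expression $g_i$, and the key device is that, because $\chi=\max\{g_i,0\}\geq g_i$ pointwise, one has $\digamma[\chi](\varsigma)\geq g_i(\varsigma+1)+g_i(\varsigma-1)-2g_i(\varsigma)$; this replaces the nonlocal term by an exact exponential sum and reduces every inequality to comparing finitely many exponentials $e^{\lambda\varsigma}$.

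Carrying this out, the $S^-$ inequality becomes, after using $\Lambda=\mu_1S_0$ and $-\beta_1S^-I^+\geq-\beta_1 S_0 e^{\mathfrak{r}_1\varsigma}$, a bound of the form $S_0M_1K_1e^{\varepsilon_1\varsigma}-\beta_1S_0e^{\mathfrak{r}_1\varsigma}\geq0$ with $K_1=\mu_1+c\varepsilon_1-(e^{\varepsilon_1}+e^{-\varepsilon_1}-2)>0$ for small $\varepsilon_1$; requiring $\varepsilon_1<\mathfrak{r}_1$ makes the first term dominant as $\varsigma\to-\infty$, and a large $M_1$ absorbs the second on $\{\varsigma<\mathfrak{X}_1\}$. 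The $V^-$ inequality is analogous, using $\alpha S_0=\mu_2V_0$ and, crucially, the fact that $S^->0$ there, which forces the ordering $\mathfrak{X}_2\leq\mathfrak{X}_1$; a constant $K_2=\mu_2+c\varepsilon_2-(e^{\varepsilon_2}+e^{-\varepsilon_2}-2)>0$ and a large $M_2$ then close the estimate.

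The main obstacle is the $I^-$ inequality. Substituting $g_3=e^{\mathfrak{r}_1\varsigma}-M_3e^{(\mathfrak{r}_1+\varepsilon_3)\varsigma}$ and using $\Delta(\mathfrak{r}_1,c)=0$, the linear part $d\digamma[I^-]-c{I^-}'-\mu_3I^-$ produces a leading term $-(\beta_1S_0+\beta_2V_0)e^{\mathfrak{r}_1\varsigma}$ that is negative and of the lowest order in $\varsigma$; it cannot be controlled by the positive correction $M_3\bigl|\Delta(\mathfrak{r}_1+\varepsilon_3,c)\bigr|e^{(\mathfrak{r}_1+\varepsilon_3)\varsigma}$ (positive precisely because $\mathfrak{r}_1<\mathfrak{r}_1+\varepsilon_3<\mathfrak{r}_2$ places it where $\Delta<0$, requiring $\varepsilon_3<\mathfrak{r}_2-\mathfrak{r}_1$), since that term decays faster. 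The resolution is that the nonlinear coupling $\beta_1S^-I^-+\beta_2V^-I^-$ contributes exactly $(\beta_1S_0+\beta_2V_0)e^{\mathfrak{r}_1\varsigma}$ to leading order, canceling the obstruction; for this cancellation to be available one needs $S^-,V^->0$ on all of $\{\varsigma<\mathfrak{X}_3\}$, i.e. the ordering $\mathfrak{X}_3\leq\mathfrak{X}_2\leq\mathfrak{X}_1$. After the cancellation the residual terms $-\beta_1S_0M_1e^{(\mathfrak{r}_1+\varepsilon_1)\varsigma}-\beta_2V_0M_2e^{(\mathfrak{r}_1+\varepsilon_2)\varsigma}$ are pitted against $M_3|\Delta(\mathfrak{r}_1+\varepsilon_3,c)|e^{(\mathfrak{r}_1+\varepsilon_3)\varsigma}$, so imposing $\varepsilon_3<\varepsilon_2<\varepsilon_1$ makes the correction dominant as $\varsigma\to-\infty$ and a sufficiently large $M_3$ finishes the argument. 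Concretely I would first fix small exponents $\varepsilon_3<\varepsilon_2<\varepsilon_1$ with $\varepsilon_1<\mathfrak{r}_1$ and $\varepsilon_3<\mathfrak{r}_2-\mathfrak{r}_1$ (so $K_1,K_2>0$), and then choose $M_1,M_2,M_3$ successively large enough to secure the three exponential estimates together with the corner ordering $\mathfrak{X}_3\leq\mathfrak{X}_2\leq\mathfrak{X}_1$.
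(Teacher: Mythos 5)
Your proposal is correct and follows essentially the same route as the paper: the upper inequalities reduce to the equilibrium identities and $\Delta(\mathfrak{r}_1,c)=0$, the lower ones are checked on either side of each corner using $\digamma[\max\{g,0\}]\geq g(\cdot+1)+g(\cdot-1)-2g(\cdot)$, and the $I^-$ inequality is closed via the same cancellation against $\Delta(\mathfrak{r}_1,c)=0$ together with $\Delta(\mathfrak{r}_1+\varepsilon_3,c)<0$ and the ordering $\mathfrak{X}_3<\min\{\mathfrak{X}_1,\mathfrak{X}_2\}$. If anything, you are more explicit than the paper about the needed parameter constraints $\varepsilon_3<\varepsilon_2<\varepsilon_1<\mathfrak{r}_1$ and $\varepsilon_3<\mathfrak{r}_2-\mathfrak{r}_1$, which the paper leaves implicit under ``$\varepsilon_i$ small enough.''
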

\begin{proof}
The proof of (\ref{up}) are trivial, so we omit the details.
Now, we focus on the proof of inequalities \eqref{low}.
If $\varsigma>\mathfrak{X}_1$, then equation (\ref{S}) holds since $S^-(\varsigma)=0$.
If $\varsigma<\mathfrak{X}_1$, then $S^-(\varsigma)=S_0(1-M_1 e^{\varepsilon_1 \varsigma})$,
and
\begin{align*}
&\ \digamma[S^-](\varsigma) + \Lambda - \mu S^-(\varsigma) - \beta_1 S^-(\varsigma)I^+(\varsigma) - c{S^-}'(\varsigma)\\
\geq &\ e^{\varepsilon_1 \varsigma} S_0 \left[-M_1 (e^{\varepsilon_1} + e^{-\varepsilon_1} - 2 - \mu - c \varepsilon_1) -\beta_1 e^{\mathfrak{r}_1\varsigma}e^{-\varepsilon_1\varsigma}\right].
\end{align*}
Note taht $2 - e^{\varepsilon_1} - e^{-\varepsilon_1} - \mu - c \varepsilon_1 < 0$ and $e^{(\mathfrak{r}_1 - \varepsilon_1)\varsigma}\leq 1$ since $0<\varepsilon_1<\mathfrak{r}_1$is small enough and $\varsigma<\mathfrak{X}_1<0$. Thus, we need to choose a sufficiently large
\[
M_1 \geq \frac{\beta_1}{\mu + c \varepsilon_1 + e^{\varepsilon_1} + e^{-\varepsilon_1} - 2}.
\]
Then (\ref{S}) holds.

As for (\ref{I}), we choose $M_3$ such that $\frac{1}{\varepsilon_3}\ln M_3>\max\left\{\frac{1}{\varepsilon_1}\ln M_1,\frac{1}{\varepsilon_2}\ln M_2\right\}$.
If $\varsigma > \mathfrak{X}_3$, then (\ref{I}) holds since $ I^-(\varsigma)=0$. If $\varsigma < \mathfrak{X}_3$, then $ I^-(\varsigma)=e^{\mathfrak{r}_1\varsigma}(1-M_3e^{\varepsilon_3 \varsigma})$,
and (\ref{I}) is equivalent to
\begin{align*}
&\ d\digamma[I^-](\varsigma) + \beta_1 S^-(\varsigma)I^-(\varsigma) + \beta_2 V^-(\varsigma)I^-(\varsigma) - \mu_3  I^-(\varsigma) - c {I^-}'(\varsigma)\\
\geq &\ d \left[e^{\mathfrak{r}_1(\varsigma+1)}\left(1-M_3 e^{\varepsilon_3(\varsigma+1)}\right)+e^{\mathfrak{r}_1(\varsigma-1)}\left(1-M_3 e^{\varepsilon_3(\varsigma-1)}\right)-2e^{\mathfrak{r}_1\varsigma}\left(1-M_3 e^{\varepsilon_3\varsigma}\right)\right]\\
&\ +\beta_1 S_0e^{\mathfrak{r}_1 \varsigma}\left(1-M_1 e^{\varepsilon_1 \varsigma}\right)\left(1-M_3 e^{\varepsilon_3\varsigma}\right)+\beta_2 V_0e^{\mathfrak{r}_1 \varsigma}\left(1-M_2 e^{\varepsilon_2 \varsigma}\right)\left(1-M_3 e^{\varepsilon_3\varsigma}\right)\\
&\ -\mu_3 e^{\mathfrak{r}_1 \varsigma}\left(1-M_3 e^{\varepsilon_3\varsigma}\right) - c\mathfrak{r}_1 e^{\mathfrak{r}_1\varsigma} + c(\mathfrak{r}_1+\varepsilon_3) e^{(\mathfrak{r}_1+\varepsilon_3)\varsigma}\\
\geq &\ e^{\mathfrak{r}_1\varsigma}\Delta(\mathfrak{r}_1,c) - e^{(\mathfrak{r}_1+\varepsilon_3)\varsigma}M_3\Delta(\mathfrak{r}_1+\varepsilon_3,c) - \beta_1 S_0 M_1 e^{(\mathfrak{r}_1+\varepsilon_1)\varsigma} - \beta_2 V_0 M_2 e^{(\mathfrak{r}_1+\varepsilon_2)\varsigma}.
\end{align*}
Using the definition of $\Delta(\mathfrak{r},c)$ and noticing that $\Delta(\mathfrak{r}_1+\varepsilon_3,c)<0$, then it suffices to show that
\[
- M_3\Delta(\mathfrak{r}_1+\varepsilon_3,c) \geq \beta_1 S_0 M_1 e^{(\varepsilon_1-\varepsilon_3)\varsigma} + \beta_2 V_0 M_2 e^{(\varepsilon_2-\varepsilon_3)\varsigma},
\]
which holds for $M_3$ is large enough.
\end{proof}

\section{Existence of traveling wave solutions}\label{Sec:Existence}

Let $\mathfrak{B}>-\mathfrak{X}_3>0$. Define
\begin{equation*}
\Gamma_\mathfrak{B} \triangleq \left\{(\phi, \varphi, \psi)\in C([-\mathfrak{B},\mathfrak{B}],\mathbb{R}^3)\left|
\begin{array}{l}
\vspace{2mm}
\displaystyle   S^-(\varsigma)\leq \phi(\varsigma) \leq S^+(\varsigma),\  V^-(\varsigma)\leq \varphi(\varsigma) \leq V^+(\varsigma),\\
\vspace{2mm}
\displaystyle   I^-(\varsigma)\leq \psi(\varsigma) \leq I^+(\varsigma)\ \ {\rm for}\ \ {\rm all}\ \ \varsigma\in[-\mathfrak{B},\mathfrak{B}],\\
\displaystyle   \phi(-\mathfrak{B})=S^-(-\mathfrak{B}),\ \ \varphi(-\mathfrak{B})=V^-(-\mathfrak{B}),\\
\displaystyle   \psi(-\mathfrak{B})=I^-(-\mathfrak{B}).
\end{array}\right.\right\}.
\end{equation*}
For any $(\phi,\varphi,\psi)\in C([-\mathfrak{B},\mathfrak{B}],\mathbb{R}^3)$,
define
\begin{equation*}
\label{hat1}
\hat{\phi}(\varsigma)=\left\{
\begin{array}{ll}
\displaystyle   \phi(\mathfrak{B}), &\mbox{for $\varsigma>\mathfrak{B}$,}
\\
\displaystyle   \phi(\varsigma), &\mbox{for $\varsigma\in[-\mathfrak{B},\mathfrak{B}]$,}
\\
\displaystyle   S^-(\varsigma), &\mbox{for $\varsigma< -\mathfrak{B}$,}\\
\end{array}\right.\ \ \
\hat{\varphi}(\varsigma)=\left\{
\begin{array}{ll}
\displaystyle   \varphi(\mathfrak{B}), &\mbox{for $\varsigma>\mathfrak{B}$,}
\\
\displaystyle   \varphi(\varsigma), &\mbox{for $\varsigma\in[-\mathfrak{B},\mathfrak{B}]$,}
\\
\displaystyle   V^-(\varsigma), &\mbox{for $\varsigma< -\mathfrak{B}$,}\\
\end{array}\right.
\end{equation*}
and
\begin{equation*}
\label{hat2}
\hat{\psi}(\varsigma)=\left\{
\begin{array}{ll}
\displaystyle   \psi(\mathfrak{B}), &\mbox{for $\varsigma>\mathfrak{B}$,}
\\
\displaystyle   \psi(\varsigma), &\mbox{for $\varsigma\in[-\mathfrak{B},\mathfrak{B}]$,}
\\
\displaystyle   I^-(\varsigma), &\mbox{for $\varsigma< -\mathfrak{B}$.}\\
\end{array}\right.
\end{equation*}
For $(\phi,\varphi,\psi)\in \Gamma_\mathfrak{B}$, consider 
\begin{equation}
\label{TruPro}\left\{
\begin{array}{l}
\vspace{2mm}
\displaystyle   cS'(\varsigma) + (2+\mu_1+\rho_1)S(\varsigma) = \hat{\phi}(\varsigma+1) + \hat{\phi}(\varsigma-1) + \Lambda + \rho_1 \phi(\varsigma) - \beta_1 \phi(\varsigma)\psi(\varsigma) := H_1(\psi,\varphi,\psi),\\
\vspace{2mm}
\displaystyle   cV'(\varsigma) + (2+\mu_2+\rho_2)V(\varsigma) = \hat{\varphi}(\varsigma+1) + \hat{\varphi}(\varsigma-1) + \alpha\phi(\varsigma) + \rho_2 \varphi - \beta_2 \varphi(\varsigma)\psi(\varsigma) := H_2(\psi,\varphi,\psi),\\
\vspace{2mm}
\displaystyle   cI'(\varsigma) + (2d+\mu_1)I(\varsigma) = \hat{\psi}(\varsigma+1) + \hat{\psi}(\varsigma-1) + \beta_1 \phi(\varsigma)\psi(\varsigma) + \beta_2 \varphi(\varsigma)\psi(\varsigma) := H_3(\psi,\varphi,\psi),\\
\displaystyle   (S,V,I)(-\mathfrak{B}) = (S^-,V^-,I^-)(-\mathfrak{B}),
\end{array}\right.
\end{equation}
where $\rho_1$ is large enough such that $\rho_1 \phi - \beta_1 \phi\psi$ is nondecreasing on $\phi$ and $\rho_2$ is large enough such that $\rho_2 \varphi - \beta_2 \varphi\psi$ is nondecreasing on $\varphi$. 
Clearly, system (\ref{TruPro}) has a unique solution $(S_\mathfrak{B}(\varsigma),V_\mathfrak{B}(\varsigma),I_\mathfrak{B}(\varsigma))\in C^1([-\mathfrak{B},\mathfrak{B}],\mathbb{R}^3)$. Define
\[
\mathcal{A} = (\mathcal{A}_1,\mathcal{A}_2,\mathcal{A}_3):\Gamma_\mathfrak{B}\rightarrow C^1\left([-\mathfrak{B},\mathfrak{B}],\mathbb{R}^3\right)
\]
by
\[
S_\mathfrak{B}(\varsigma)=\mathcal{A}_1(\phi,\varphi,\psi)(\varsigma),\ \ V_\mathfrak{B}(\varsigma)=\mathcal{A}_2(\phi,\varphi,\psi)(\varsigma)\ \ {\rm and}\ \ I_\mathfrak{B}(\varsigma)=\mathcal{A}_3(\phi,\varphi,\psi)(\varsigma).
\]


\begin{lemma}\label{OA}
The operator $\mathcal{A}$ maps $\Gamma_\mathfrak{B}$ into itself and it is completely continuous.
\end{lemma}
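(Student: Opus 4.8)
The plan is to treat each line of (\ref{TruPro}) as a scalar linear first-order ODE with a prescribed value at $\varsigma=-\mathfrak{B}$, so that the solution operator can be written explicitly and its order-preserving behaviour read off directly. Writing $\lambda_1=2+\mu_1+\rho_1$, the first equation integrates to
\[
S_\mathfrak{B}(\varsigma)=e^{-\frac{\lambda_1}{c}(\varsigma+\mathfrak{B})}S^-(-\mathfrak{B})+\frac1c\int_{-\mathfrak{B}}^{\varsigma}e^{-\frac{\lambda_1}{c}(\varsigma-s)}H_1(\phi,\varphi,\psi)(s)\ud s,
\]
and analogously for $V_\mathfrak{B}$ and $I_\mathfrak{B}$ with $\lambda_2=2+\mu_2+\rho_2$ and $\lambda_3=2d+\mu_3$. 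This already shows that $\mathcal{A}$ is well defined, maps into $C^1([-\mathfrak{B},\mathfrak{B}],\mathbb{R}^3)$, and meets the boundary condition at $-\mathfrak{B}$ demanded by $\Gamma_\mathfrak{B}$.

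For the invariance I would argue componentwise by comparison. Take the $S$-component and set $W=S^+-S_\mathfrak{B}$. Adding $\lambda_1 S^+$ to both sides of the super-solution inequality (\ref{up}) and using $S^+\equiv S_0$ gives $c(S^+)'+\lambda_1 S^+\ge S^+(\varsigma+1)+S^+(\varsigma-1)+\Lambda+\rho_1 S^+-\beta_1 S^+I^-$. Since $(\phi,\varphi,\psi)\in\Gamma_\mathfrak{B}$ yields $\phi\le S^+$, $\hat\phi\le S^+$, $\psi\ge I^-$, and since $\rho_1$ was chosen so that $\rho_1 t-\beta_1 t\psi$ is nondecreasing in $t$, the right-hand side dominates $H_1(\phi,\varphi,\psi)$; hence $cW'+\lambda_1 W\ge0$ with $W(-\mathfrak{B})\ge0$. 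Multiplying by $e^{\lambda_1\varsigma/c}$ shows $e^{\lambda_1\varsigma/c}W(\varsigma)$ is nondecreasing and nonnegative at $-\mathfrak{B}$, so $W\ge0$, i.e. $S_\mathfrak{B}\le S^+$. The lower bound $S_\mathfrak{B}\ge S^-$ comes the same way from the sub-solution inequality (\ref{S}) using $\phi\ge S^-$, $\hat\phi\ge S^-$, $\psi\le I^+$ and the same monotonicity. The bounds for $V_\mathfrak{B}$ use in addition $S^-\le\phi\le S^+$ to control the coupling term $\alpha\phi$, while those for $I_\mathfrak{B}$ use that the bilinear terms $\beta_1\phi\psi+\beta_2\varphi\psi$ are nondecreasing in each nonnegative argument, so $I^-\le I_\mathfrak{B}\le I^+$ follows from $\phi\le S^+$, $\varphi\le V^+$, $\psi\le I^+$ and their lower counterparts.

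The point needing care---and the main obstacle---is that $S^-,V^-,I^-$ are only piecewise smooth, with corners at $\mathfrak{X}_1,\mathfrak{X}_2,\mathfrak{X}_3$, so each differential inequality holds only on the open subintervals. I would handle this through the same auxiliary function $Z(\varsigma)=e^{\lambda_i\varsigma/c}W(\varsigma)$: because $W$ is continuous across each $\mathfrak{X}_i$ (the sub-solution is continuous and $S_\mathfrak{B}\in C^1$) and $Z'\ge0$ on each subinterval, $Z$ is nondecreasing on all of $[-\mathfrak{B},\mathfrak{B}]$, and the corners---where the left derivative of the sub-solution is negative and the right derivative is $0$---do not spoil the monotonicity of $Z$. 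One also checks that where the sub-solution vanishes the required inequality degenerates to $0\le\digamma[\cdot]+\Lambda$ (resp. a manifestly nonnegative expression), which is immediate. This gives $\mathcal{A}(\Gamma_\mathfrak{B})\subseteq\Gamma_\mathfrak{B}$.

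Finally, complete continuity splits into continuity and compactness. Continuity follows from the integral representation: if $(\phi_k,\varphi_k,\psi_k)\to(\phi,\varphi,\psi)$ uniformly, then each $H_i$ converges uniformly (the shift, product and truncation operations are continuous on $C([-\mathfrak{B},\mathfrak{B}])$), and the bounded integral operator passes to the limit, so $\mathcal{A}(\phi_k,\varphi_k,\psi_k)\to\mathcal{A}(\phi,\varphi,\psi)$ uniformly. For compactness, note that on $\Gamma_\mathfrak{B}$ the data $H_i$ are uniformly bounded (the inputs lie between the bounded sub- and super-solutions), hence so are $S_\mathfrak{B},V_\mathfrak{B},I_\mathfrak{B}$; reading the derivatives off (\ref{TruPro}) as $S_\mathfrak{B}'=c^{-1}(H_1-\lambda_1 S_\mathfrak{B})$, and similarly for the other two, gives a uniform bound on the first derivatives, so $\mathcal{A}(\Gamma_\mathfrak{B})$ is equicontinuous and uniformly bounded. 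The Arzel\`a--Ascoli theorem then yields relative compactness in $C([-\mathfrak{B},\mathfrak{B}],\mathbb{R}^3)$, and $\mathcal{A}$ is completely continuous.
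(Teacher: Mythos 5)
Your proof is correct and follows essentially the same route as the paper: the explicit integral (variation-of-constants) representation of the solution of \eqref{TruPro}, invariance via the sub-/super-solution inequalities of Lemma \ref{UpLow} together with the monotonicity built into the choice of $\rho_1,\rho_2$, and complete continuity from the integral formula plus uniform derivative bounds and Arzel\`a--Ascoli. The only difference is one of detail: the paper declares the invariance ``easy by Lemma \ref{UpLow}'' and omits it, whereas you carry out the comparison argument (including the treatment of the corner points $\mathfrak{X}_i$) explicitly, and you phrase continuity sequentially rather than via the paper's Lipschitz-type estimate; both are sound.
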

\begin{proof}
Firstly, it is easy to show $\mathcal{A}$ maps $\Gamma_\mathfrak{B}$ into $\Gamma_\mathfrak{B}$ by Lemma \ref{UpLow}, so we omit the details.
Next, we focus on the second part of Lemma \ref{OA}.
For $i=1,2$, suppose that $(\phi_i(\varsigma),\varphi_i(\varsigma),\psi_i(\varsigma))\in\Gamma_\mathfrak{B}$ with
\[
S_{X,i}(\varsigma)=\mathcal{A}_1(\phi_i(\varsigma),\varphi_i(\varsigma),\psi_i(\varsigma)),\ \ V_{X,i}(\varsigma)=\mathcal{A}_2(\phi_i(\varsigma),\varphi_i(\varsigma),\psi_i(\varsigma)),
\]
and
\[
I_{X,i}(\varsigma)=\mathcal{A}_3(\phi_i(\varsigma),\varphi_i(\varsigma),\psi_i(\varsigma)).
\]
Direct calculation yields
\[
S_\mathfrak{B}(\varsigma) = S^-(-\mathfrak{B}) e ^{-\frac{2+\mu_1+\rho_1}{2}(\varsigma+\mathfrak{B})} + \frac{1}{c}\int_{-\mathfrak{B}}^\varsigma e ^{\frac{2+\mu_1+\rho_1}{2}(\tau+\mathfrak{B})}H_1(\phi,\varphi,\psi)(\tau)\d \tau,
\]
\[
V_\mathfrak{B}(\varsigma) = V^-(-\mathfrak{B}) e ^{-\frac{2+\mu_2+\rho_2}{2}(\varsigma+\mathfrak{B})} + \frac{1}{c}\int_{-\mathfrak{B}}^\varsigma e ^{\frac{2+\mu_2+\rho_2}{2}(\tau+\mathfrak{B})}H_2(\phi,\varphi,\psi)(\tau)\d \tau
\]
and
\[
I_\mathfrak{B}(\varsigma) = I^-(-\mathfrak{B}) e ^{-\frac{2d+\mu_3}{2}(\varsigma+\mathfrak{B})} + \frac{1}{c}\int_{-\mathfrak{B}}^\varsigma e ^{\frac{2d+\mu_3}{2}(\tau+\mathfrak{B})}H_3(\phi,\varphi,\psi)(\tau)\d \tau,
\]
For $i=1,2$ and any $(\phi_i, \varphi_i, \psi_i)\in\Gamma_\mathfrak{B}$, we have
\begin{align*}
|\phi_1(\varsigma)\psi_1(\varsigma) - \phi_2(\varsigma)\psi_2(\varsigma)|
\leq & \ |\phi_1(\varsigma)\psi_1(\varsigma) - \phi_1(\varsigma)\psi_2(\varsigma)|+|\phi_1(\varsigma)\psi_2(\varsigma) - \phi_2(\varsigma)\psi_2(\varsigma)|\\
\leq & \ S_0 \max_{\varsigma\in[-\mathfrak{B},\mathfrak{B}]}|\psi_1(\varsigma)-\psi_2(\varsigma)| + e^{\mathfrak{r}_1 \mathfrak{B}} \max_{\varsigma\in[-\mathfrak{B},\mathfrak{B}]}|\phi_1(\varsigma)-\phi_2(\varsigma)|.
\end{align*}
Hence,
\begin{align*}
&\ c(S_{\mathfrak{B},1}'(\varsigma)-S_{\mathfrak{B},2}'(\varsigma))+(2+\mu_1)(S_{\mathfrak{B},1}(\varsigma)-S_{\mathfrak{B},2}(\varsigma))\\
\leq &\ |(\hat{\phi}_1(\varsigma+1)-\hat{\phi}_2(\varsigma+1))| + |(\hat{\phi}_1(\varsigma-1)-\hat{\phi}_2(\varsigma-1))| + \beta_1|\phi_1(\varsigma)\psi_1(\varsigma) - \phi_2(\varsigma)\psi_2(\varsigma)|\\
\leq &\ S_0 \max_{\varsigma\in[-\mathfrak{B},\mathfrak{B}]}|\psi_1(\varsigma)-\psi_2(\varsigma)| + \left(2+e^{\mathfrak{r}_1 \mathfrak{B}}\right)\max_{\varsigma\in[-\mathfrak{B},\mathfrak{B}]}|\phi_1(\varsigma)-\phi_2(\varsigma)|.
\end{align*}
Hence, the operator $\mathcal{A}$ is continuous by some similar arguments with $V_\mathfrak{B}$ and $I_\mathfrak{B}$. Moreover, $S_\mathfrak{B}'$, $V_\mathfrak{B}'$ and $I_\mathfrak{B}'$ are bounded by (\ref{TruPro}).
Thus, the operator $\mathcal{A}$ is completely continuous.
\end{proof}

By using Schauder's fixed point theorem, there exists $(S_\mathfrak{B},V_\mathfrak{B},I_\mathfrak{B})\in\Gamma_\mathfrak{B}$ such that
\[
(S_\mathfrak{B}(\varsigma),V_\mathfrak{B}(\varsigma),I_\mathfrak{B}(\varsigma)) = \mathcal{A}(S_\mathfrak{B},V_\mathfrak{B},I_\mathfrak{B})(\varsigma)
\]
for $\varsigma\in[-\mathfrak{B},\mathfrak{B}]$. Next, we give some prior estimate for $(S_\mathfrak{B},V_\mathfrak{B},I_\mathfrak{B})$.
Define
\[
C^{1,1}([-\mathfrak{B},\mathfrak{B}])=\{\upsilon\in C^1([-\mathfrak{B},\mathfrak{B}])\ |\ \upsilon,\upsilon' \textrm{are Lipschitz continuous}\}
\]
with
\begin{gather*}
  \|\upsilon\|_{C^{1,1}([-\mathfrak{B},\mathfrak{B}])}=\max_{x\in[-\mathfrak{B},\mathfrak{B}]}|\upsilon|+\max_{x\in[-\mathfrak{B},\mathfrak{B}]}|\upsilon'|+
  \sup_{\begin{subarray}{c}  x,y\in [-\mathfrak{B},\mathfrak{B}]   \\
                             x\neq y
        \end{subarray}}\frac{|\upsilon'(x)-\upsilon'(y)|}{|x-y|}.
\end{gather*}
\begin{lemma}\label{LemC}
There exists constant $\mathcal{C}(\mathcal{X})>0$ such that
\begin{equation*}
\|S_\mathfrak{B}\|_{C^{1,1}([-\mathcal{X},\mathcal{X}])}\leq \mathcal{C}(\mathcal{X}),\ \ \|V_\mathfrak{B}\|_{C^{1,1}([-\mathcal{X},\mathcal{X}])}\leq \mathcal{C}(\mathcal{X})\ \ {\rm and}\ \ \|I_\mathfrak{B}\|_{C^{1,1}([-\mathcal{X},\mathcal{X}])}\leq \mathcal{C}(\mathcal{X})
\end{equation*}
for $\mathcal{X}<\mathfrak{B}$.
\end{lemma}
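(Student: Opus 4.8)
The plan is to obtain the estimate in three nested layers—first a uniform $C^0$ bound, then a uniform $C^1$ bound, and finally a bound on the Lipschitz seminorm of the derivative—each established on a slightly larger interval than the next, so that the nonlocal shifts $\varsigma\pm 1$ never cost us control. The single point to watch throughout is that every bound inherited from membership in $\Gamma_\mathfrak{B}$ must be shown \emph{independent of} $\mathfrak{B}$ on fixed compact sets; this uniformity is precisely what makes $\mathcal{C}(\mathcal{X})$ depend only on $\mathcal{X}$ and is what will later yield Arzel\`a--Ascoli compactness as $\mathfrak{B}\to+\infty$.

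First I would record the $C^0$ information. Since $(S_\mathfrak{B},V_\mathfrak{B},I_\mathfrak{B})\in\Gamma_\mathfrak{B}$, the enclosure of Lemma \ref{UpLow} gives $0\leq S_\mathfrak{B}\leq S_0$, $0\leq V_\mathfrak{B}\leq V_0$ and $0\leq I_\mathfrak{B}(\varsigma)\leq e^{\mathfrak{r}_1\varsigma}$ on $[-\mathfrak{B},\mathfrak{B}]$. A short check using $\mathfrak{r}_1>0$ shows the extensions satisfy the \emph{same} pointwise bounds on all of $\mathbb{R}$, namely $0\leq\hat{S}_\mathfrak{B}\leq S_0$, $0\leq\hat{V}_\mathfrak{B}\leq V_0$, $0\leq\hat{I}_\mathfrak{B}(\varsigma)\leq e^{\mathfrak{r}_1\varsigma}$. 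In particular, on any $[-\mathcal{X}-2,\mathcal{X}+2]$ these reduce to the $\mathfrak{B}$-free constants $S_0$, $V_0$, $e^{\mathfrak{r}_1(\mathcal{X}+2)}$.

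Second, I would pass from the fixed-point identity to its differential form, recovering the wave equations; for the first component,
\[
cS_\mathfrak{B}'(\varsigma)=\hat{S}_\mathfrak{B}(\varsigma+1)+\hat{S}_\mathfrak{B}(\varsigma-1)-2S_\mathfrak{B}(\varsigma)+\Lambda-\mu_1 S_\mathfrak{B}(\varsigma)-\beta_1 S_\mathfrak{B}(\varsigma)I_\mathfrak{B}(\varsigma),
\]
and analogously for $V_\mathfrak{B}$ and $I_\mathfrak{B}$. For $\varsigma\in[-\mathcal{X}-1,\mathcal{X}+1]$ the shifted arguments lie in $[-\mathcal{X}-2,\mathcal{X}+2]$, where step one bounds every term on the right; dividing by the fixed $c>0$ yields $|S_\mathfrak{B}'|,|V_\mathfrak{B}'|,|I_\mathfrak{B}'|\leq\mathcal{C}_1(\mathcal{X})$ on $[-\mathcal{X}-1,\mathcal{X}+1]$. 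Hence each of $S_\mathfrak{B},V_\mathfrak{B},I_\mathfrak{B}$ is Lipschitz there with a $\mathfrak{B}$-independent constant.

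Third, to bound the derivative seminorm on $[-\mathcal{X},\mathcal{X}]$, I would read it off the same equations: the seminorm of $S_\mathfrak{B}'$ equals $c^{-1}$ times that of the right-hand side. The constant and linear terms are trivially Lipschitz via the step-two derivative bound; the nonlocal terms $\hat{S}_\mathfrak{B}(\varsigma\pm1)$ are Lipschitz because $S_\mathfrak{B}$ is $C^1$ with bounded derivative on the enlarged interval $[-\mathcal{X}-1,\mathcal{X}+1]$; and the bilinear term $\beta_1 S_\mathfrak{B}I_\mathfrak{B}$ is Lipschitz as a product of bounded Lipschitz functions. Summing gives the desired bound on $\sup_{x\neq y}|S_\mathfrak{B}'(x)-S_\mathfrak{B}'(y)|/|x-y|$, and the identical argument applies to $V_\mathfrak{B}$ and $I_\mathfrak{B}$, completing the $C^{1,1}$ estimate. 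The only genuine obstacle is the nonlocal coupling, which forces a bound at each regularity level on $[-\mathcal{X},\mathcal{X}]$ to rest on the next-lower bound over $[-\mathcal{X}-1,\mathcal{X}+1]$; the nested-interval bookkeeping handles this cleanly, and no estimate degrades in $\mathfrak{B}$ since the enclosing functions $S_0$, $V_0$, $e^{\mathfrak{r}_1\varsigma}$ are themselves $\mathfrak{B}$-free.
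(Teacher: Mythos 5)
Your proposal is correct and follows essentially the same route as the paper: $C^0$ bounds from membership in $\Gamma_\mathfrak{B}$, then $C^1$ bounds by reading the derivative off the fixed-point (wave) equations, then the Lipschitz seminorm of the derivative from the same equations using the Lipschitz continuity of the shifted terms and of the bilinear product. The only difference is presentational: you carry out the nested-interval bookkeeping for the nonlocal shifts explicitly, where the paper disposes of that step by citing \cite{ZhangWuIJB2019}.
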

\begin{proof}
Since $(S_\mathfrak{B},V_\mathfrak{B},I_\mathfrak{B})$ is the fixed point of $\mathcal{A},$ one has
\begin{equation}
\label{FixEqu}\left\{
\begin{array}{l}
\vspace{2mm}
\displaystyle   cS_\mathfrak{B}'(\varsigma) = \hat{S}_\mathfrak{B}(\varsigma+1) + \hat{S}_\mathfrak{B}(\varsigma-1) - (2+\mu_1)S_\mathfrak{B}(\varsigma) + \Lambda - \beta_1 S_\mathfrak{B}(\varsigma)I_\mathfrak{B}(\varsigma),\\
\vspace{2mm}
\displaystyle   cV_\mathfrak{B}'(\varsigma) = \hat{V}_\mathfrak{B}(\varsigma+1) + \hat{V}_\mathfrak{B}(\varsigma-1) - (2+\mu_2)V_\mathfrak{B}(\varsigma) + \alpha S_\mathfrak{B}(\varsigma) - \beta_2 V_\mathfrak{B}(\varsigma)I_\mathfrak{B}(\varsigma),\\
\displaystyle   cI_\mathfrak{B}'(\varsigma) = d\hat{I}_\mathfrak{B}(\varsigma+1) + d\hat{I}_\mathfrak{B}(\varsigma-1) - (2d+\mu_3)I_\mathfrak{B}(\varsigma) + (\beta_1 S_\mathfrak{B}(\varsigma) + \beta_2 V_\mathfrak{B}(\varsigma))I_\mathfrak{B}(\varsigma),
\end{array}\right.
\end{equation}
where
\begin{equation*}
\label{hatSV}
\hat{S}_\mathfrak{B}(\varsigma)=\left\{
\begin{array}{ll}
\displaystyle   S_\mathfrak{B}(\mathfrak{B}), &\mbox{for $\varsigma>\mathfrak{B}$,}
\\
\displaystyle   S_\mathfrak{B}(\varsigma), &\mbox{for $\varsigma\in[-\mathfrak{B},\mathfrak{B}]$,}
\\
\displaystyle   S^-(\varsigma), &\mbox{for $\varsigma< -\mathfrak{B}$,}\\
\end{array}\right.\ \ \
\hat{V}_\mathfrak{B}(\varsigma)=\left\{
\begin{array}{ll}
\displaystyle   V_\mathfrak{B}(\mathfrak{B}), &\mbox{for $\varsigma>\mathfrak{B}$,}
\\
\displaystyle   V_\mathfrak{B}(\varsigma), &\mbox{for $\varsigma\in[-\mathfrak{B},\mathfrak{B}]$,}
\\
\displaystyle   V^-(\varsigma), &\mbox{for $\varsigma< -\mathfrak{B}$,}\\
\end{array}\right.
\end{equation*}
and
\begin{equation*}
\label{hatI}
\hat{I}_\mathfrak{B}(\varsigma)=\left\{
\begin{array}{ll}
\displaystyle   I_\mathfrak{B}(\mathfrak{B}), &\mbox{for $\varsigma>\mathfrak{B}$,}
\\
\displaystyle   I_\mathfrak{B}(\varsigma), &\mbox{for $\varsigma\in[-\mathfrak{B},\mathfrak{B}]$,}
\\
\displaystyle   I^-(\varsigma), &\mbox{for $\varsigma< -\mathfrak{B}$.}\\
\end{array}\right.
\end{equation*}
Since $0\leq S_\mathfrak{B}(\varsigma)\leq S_0$, $0\leq V_\mathfrak{B}(\varsigma)\leq V_0$ and $0\leq I_\mathfrak{B}(\varsigma)\leq e^{\mathfrak{r}_1 \mathcal{X}}$ for all $\varsigma\in[-\mathcal{X},\mathcal{X}]$,
from (\ref{FixEqu}) we have
\[
|S_\mathfrak{B}'(\varsigma)|\leq \frac{4+\mu_1}{c}S_0 + \frac{\Lambda}{c} + \frac{\beta_1S_0}{c}e^{\mathfrak{r}_1 \mathcal{X}},
\]
\[
|V_\mathfrak{B}'(\varsigma)|\leq \frac{4+\mu_2}{c}V_0 + \frac{\alpha S_0}{c} + \frac{\beta_2V_0}{c}e^{\mathfrak{r}_1 \mathcal{X}},
\]
and
\[
|I_\mathfrak{B}'(\varsigma)|\leq \frac{4d+\mu_3 + (\beta_1S_0 + \beta_2V_0)}{c}e^{\mathfrak{r}_1 \mathcal{X}}.
\]
Hence,
\[
\|S_\mathfrak{B}\|_{C^{1}([-\mathcal{X},\mathcal{X}])}\leq C_1(\mathcal{X}),\ \ \|V_\mathfrak{B}\|_{C^{1}([-\mathcal{X},\mathcal{X}])}\leq C_1(\mathcal{X})\ \ {\rm and}\ \ \|I_\mathfrak{B}\|_{C^{1}([-\mathcal{X},\mathcal{X}])}\leq C_1(\mathcal{X}).
\]
for some constant $C_1(\mathcal{X}) > 0$.
It follows from \cite{ZhangWuIJB2019} that $|\hat{S}_\mathfrak{B}(\varsigma+1)-\hat{S}_\mathfrak{B}(\eta+1)|\leq C_1(\mathcal{X})|\varsigma-\eta|$ and $|\hat{S}_\mathfrak{B}(\varsigma-1)-\hat{S}_\mathfrak{B}(\eta-1)|\leq C_1(\mathcal{X})|\varsigma-\eta|$ for all $\varsigma,\eta\in[-\mathcal{X},\mathcal{X}]$. Furthermore
\begin{align*}
&\ |\beta_1 S_\mathfrak{B}(\varsigma)I_\mathfrak{B}(\varsigma) - \beta_1 S_\mathfrak{B}(\eta)I_\mathfrak{B}(\eta)|\\
\leq &\ |\beta_1 S_\mathfrak{B}(\varsigma)I_\mathfrak{B}(\varsigma) - \beta_1 S_\mathfrak{B}(\varsigma)I_\mathfrak{B}(\eta)|+|\beta_1 S_\mathfrak{B}(\varsigma)I_\mathfrak{B}(\eta) - \beta_1 S_\mathfrak{B}(\eta)I_\mathfrak{B}(\eta)|\\
\leq &\ \beta_1 C_1(\mathcal{X})\left(|S_\mathfrak{B}(\varsigma)-S_\mathfrak{B}(\eta)| + |I_\mathfrak{B}(\varsigma)-I_\mathfrak{B}(\eta)|\right)
\end{align*}
for all $\varsigma,\eta\in[-\mathcal{X},\mathcal{X}]$. Thus, $\|S_\mathfrak{B}\|_{C^{1,1}([-\mathcal{X},\mathcal{X}])}\leq \mathcal{C}(\mathcal{X})$ for some constant $\mathcal{C}(\mathcal{X}) > 0$.
Similarly,
\[
|V_\mathfrak{B}\|_{C^{1,1}([-\mathcal{X},\mathcal{X}])}\leq \mathcal{C}(\mathcal{X})\ \ {\rm and}\ \ \|I_\mathfrak{B}\|_{C^{1,1}([-\mathcal{X},\mathcal{X}])}\leq \mathcal{C}(\mathcal{X}).
\]
for any $\mathcal{X}<\mathfrak{B}$.
\end{proof}

With the help of Lemma \ref{LemC} and following from the standard arguments in \cite{ZhangWangLiuJNS2021}, we can conclude that $(S, V, I)$ is solution for system (\ref{WaveEqu}) with
\[
S^-\leq S(\varsigma)\leq S^+,\ \ V^-\leq V(\varsigma)\leq V^+,\ \ I^-\leq I(\varsigma)\leq I^+,\ \ \forall \varsigma\in\mathbb{R}.
\]

\section{Boundedness of traveling wave solution}\label{Sec:Bound}
In the following, we first show the boundedness of $(S, V, I)$.
\begin{lemma}\label{lem1}
The functions $S(\varsigma)$, $V(\varsigma)$ and $I(\varsigma)$ satisfy
\[
0<S(\varsigma)<S_0,\ \ 0<V(\varsigma)<V_0\ \ {\rm and}\ \  I(\varsigma)>0\ \ {\rm in}\ \ \mathbb{R}.
\]
\end{lemma}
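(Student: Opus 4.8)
The plan is to leverage the sub- and super-solution bounds already established at the end of Section 3, namely $S^-\le S\le S^+=S_0$, $V^-\le V\le V^+=V_0$ and $I^-\le I\le I^+=e^{\mathfrak{r}_1\varsigma}$, and then upgrade the weak inequalities to strict ones using the structure of the wave equations \eqref{WaveEqu}. The upper bounds $S(\varsigma)\le S_0$ and $V(\varsigma)\le V_0$ are immediate from $S^+\equiv S_0$ and $V^+\equiv V_0$; the work is to show these are strict and that $S,V,I$ are strictly positive everywhere on $\mathbb{R}$.

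First I would establish positivity of $I$. From the definition of $I^-$ we only get $I(\varsigma)\ge 0$, so suppose for contradiction that $I(\varsigma_0)=0$ at some finite point $\varsigma_0$. Rewriting the third equation of \eqref{WaveEqu} as
\[
cI'(\varsigma)-\bigl(\beta_1 S(\varsigma)+\beta_2 V(\varsigma)-\mu_3+2d\bigr)I(\varsigma)=d\bigl[I(\varsigma+1)+I(\varsigma-1)\bigr]\ge 0,
\]
the right-hand side is nonnegative because $I\ge 0$ everywhere. Treating the left-hand side as a first-order linear differential inequality with bounded coefficient (the coefficient is bounded since $0\le S\le S_0$, $0\le V\le V_0$), a Gronwall/integrating-factor argument forces $I$ to remain zero for all $\varsigma<\varsigma_0$; but this contradicts $I(\varsigma)\ge I^-(\varsigma)>0$ for $\varsigma$ slightly below $\mathfrak{X}_3$. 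Hence $I(\varsigma)>0$ for all $\varsigma\in\mathbb{R}$. I expect this to be the main obstacle, since the nonlocal shift terms $I(\varsigma\pm 1)$ prevent a purely pointwise maximum-principle argument and one must exploit their nonnegativity carefully together with the lower bound $I^-$.

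With $I>0$ in hand, strict positivity and the strict upper bounds for $S$ and $V$ follow by the same integrating-factor technique applied to the first two equations. For the upper bound, set $W(\varsigma)=S_0-S(\varsigma)\ge 0$; subtracting the $S$-equation from the steady relation $\Lambda-\mu_1 S_0=0$ (using $S_0=\Lambda/\mu_1$) yields a linear inequality of the form $cW'(\varsigma)+(\mu_1+2)W(\varsigma)=\digamma[S_0-S]+\beta_1 S I\ge 0$ modulo the nonlocal terms, which are controlled since $W\ge 0$; the integrating factor $e^{(\mu_1+2)\varsigma/c}$ then shows $W$ cannot vanish at an interior point unless it vanishes identically, and the latter is impossible because $\beta_1 S I>0$. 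An identical argument with $V_0-V$ and the relation $\alpha S_0-\mu_2 V_0=0$ gives $V(\varsigma)<V_0$, while writing the $S$- and $V$-equations with the reaction term moved to the left establishes $S,V>0$. Throughout I would keep the coefficients bounded using the already-proved envelope bounds, so every integrating-factor estimate is legitimate.
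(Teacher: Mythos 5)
Your argument is correct, but it runs on a different engine from the paper's. The paper works pointwise at a touching point: if $S(\varsigma_0)=0$, then $\varsigma_0$ is a global minimum of the nonnegative function $S$, so $S'(\varsigma_0)=0$ and $\digamma[S](\varsigma_0)=S(\varsigma_0+1)+S(\varsigma_0-1)\ge 0$, and the first equation of (\ref{WaveEqu}) collapses to $0=\digamma[S](\varsigma_0)+\Lambda>0$; the strict upper bounds are obtained the same way at a touching maximum (using $I>0$ to make $-\beta_1 S_0 I$ strictly negative), and for $I$ the paper evaluates at the \emph{leftmost} zero $\varsigma_1$, getting $I(\varsigma_1+1)+I(\varsigma_1-1)=0$ and hence $I(\varsigma_1-1)=0$, which contradicts the minimality of $\varsigma_1$. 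You instead integrate: isolating the nonnegative shifted terms on one side and multiplying by an integrating factor, you show that a zero of $I$ (resp.\ of $S$, of $S_0-S$) would propagate to the entire half-line $(-\infty,\varsigma_0]$, contradicting $I\ge I^->0$ there (resp.\ the strictly positive forcing $\Lambda$, $\beta_1 SI$). Both routes exploit exactly the same structural facts --- nonnegativity of the nonlocal shifts plus a strictly positive source term --- so neither buys extra generality; the paper's version is shorter (one evaluation per bound, no integration), while yours sidesteps the need to single out a ``first'' zero of $I$ and treats the shifts purely by sign. Three minor points to tidy up, none of which affects validity: the coefficient in your rewritten $I$-equation should be $\beta_1S+\beta_2V-\mu_3-2d$ rather than $\cdots+2d$; in the $W=S_0-S$ step the clean identity is $cW'+(\mu_1+2)W=W(\varsigma+1)+W(\varsigma-1)+\beta_1SI$, since the $-2W$ inside $\digamma[W]$ is already absorbed into the left-hand side; and the deductions should be ordered so that $S>0$ is in hand before you invoke $\beta_1SI>0$ in the upper-bound step.
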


\begin{proof}
Firstly, to show $S(\varsigma)>0$. If there exists some $\varsigma_0$ such that $S(\varsigma_0) = 0$, then $\digamma[S](\varsigma_0)\geq0$ and $S'(\varsigma_0) = 0$. Due to (\ref{WaveEqu}), we have
\[
0 = \digamma[S](\varsigma_0) + \Lambda > 0,
\]
which is a contradiction. Similarly, we have $ V(\varsigma)>0$ in $\mathbb{R}$.

Next, if there is $\varsigma_1$ such that $I(\varsigma_1) = 0$ and $I(\varsigma)>0$ for $\varsigma<\varsigma_1$.
From the third equation of (\ref{WaveEqu}), we have
\[
 I(\varsigma_1+1) +  I(\varsigma_1-1) = 0.
\]
Consequently, $ I(\varsigma_1+1) =  I(\varsigma_1-1) = 0$ since $I(\varsigma)\geq0$ in $\mathbb{R}$, which is a contradiction.

Lastly, we show that $S(\varsigma)<S_0$, if there exists $\varsigma_2$ such that $S(\varsigma_2) = S_0$, one has that
\[
0 = \digamma[S](\varsigma_2) - \beta_1 S(\varsigma_2) <0.
\]
This contradiction leads to $S(\varsigma)<S_0$. Similarly, we have $V(\varsigma)<V_0$ for all $\varsigma\in\mathbb{R}$.
This ends the proof.
\end{proof}

Now, we show the following four claims.

\textbf{Claim I}. The functions $\frac{ I(\varsigma\pm1)}{ I(\varsigma)}$ is bounded in $\mathbb{R}$.

To show this claim, we denote $\kappa := (2+\mu_3)/c$ and $U(\varsigma) := e^{\kappa \varsigma} I(\varsigma)$, one has that
\[
c U'(\varsigma) = e^{\kappa \varsigma}(c I'(\varsigma) + (\mu_3 + 2) I(\varsigma)>0,
\]
From the monotonicity of $U(\varsigma)$, we have
\[
\frac{ I(\varsigma-1)}{I(\varsigma)} < e^\kappa
\]
in $\mathbb{R}$.
Direct calculation yields
\begin{align}\label{Equ3}
\nonumber\left[e^{\kappa\varsigma} I(\varsigma)\right]' & = \frac{1}{c}e^{\kappa\varsigma}\left[d I(\varsigma+1) + d I(\varsigma-1) + (\beta_1S(\varsigma) + \beta_2 V(\varsigma)) I(\varsigma)\right]\\
& > \frac{d}{c}e^{\kappa\varsigma} I(\varsigma+1).
\end{align}
Integrating (\ref{Equ3}) over $[\varsigma,\varsigma+1]$ and using the monotonicity of $e^{\kappa \varsigma}$, one has
\begin{align*}
e^{\kappa(\varsigma+1)} I(\varsigma+1)\ > & \ e^{\kappa\varsigma} I(\varsigma) + \frac{d}{c}\int_\varsigma^{\varsigma+1}e^{\kappa s} I(s+1)\d s\\
> & \ e^{\kappa\varsigma} I(\varsigma) + \frac{d}{c}\int_\varsigma^{\varsigma+1}e^{\kappa (\varsigma+1)} I(\varsigma+1)e^{-\kappa}\d s\\
= & \ e^{-\kappa}\left[ I(\varsigma) + \frac{d}{c} I(\varsigma+1)\right].
\end{align*}
Hence,
\begin{equation}\label{Equ4}
\left[e^{\kappa\varsigma} I(\varsigma)\right]' > \left(\frac{d}{c}\right)^2 e^{-2\kappa}e^{\kappa(\varsigma+1)} I(\varsigma+1).
\end{equation}
Integrating (\ref{Equ4}) from $\varsigma-\frac{1}{2}$ to $\varsigma$ yields
\[
\frac{ I\left(\varsigma+\frac{1}{2}\right)}{ I(\varsigma)} < 2 \left(\frac{c}{d}\right)^2 e^{\frac{3}{2}\kappa},\ \ \forall\varsigma\in\mathbb{R}.
\]
Similarly, integrating (\ref{Equ4}) over $[\varsigma, \varsigma+\frac{1}{2}]$, we have
\[
\frac{ I(\varsigma+1)}{ I\left(\varsigma+\frac{1}{2}\right)} < 2 \left(\frac{c}{d}\right)^2 e^{\frac{3}{2}\kappa},\ \ \forall\varsigma\in\mathbb{R}.
\]
Thus
\[
\frac{ I(\varsigma+1)}{ I(\varsigma)} = \frac{ I\left(\varsigma+\frac{1}{2}\right)}{ I(\varsigma)} \frac{ I(\varsigma+1)}{ I\left(\varsigma+\frac{1}{2}\right)} < 4 \left(\frac{c}{d}\right)^4 e^{3\kappa},\ \ \forall\varsigma\in\mathbb{R}.
\]

\textbf{Claim II}. $\frac{ I'(\varsigma)}{ I(\varsigma)}$ is bounded in $\mathbb{R}$.

This claim is true because Claim I and the third equation of (\ref{WaveEqu}).

Choose a sequence $\{c_k,S_k, V_k, I_k\}$ of the TWS for (\ref{Model}) in a compact subinterval of $(0,\infty)$, we have the following claim.

\textbf{Claim III}. For a sequence $\{\varsigma_k\}$, we have $S(\varsigma_k)\rightarrow 0$ and $ V(\varsigma_k)\rightarrow 0$ as $k\rightarrow +\infty$ provided that $I(\varsigma_k)\rightarrow +\infty$ as $k\rightarrow +\infty$.

Let $\varsigma_k$ be a subsequence of $\{\varsigma_k\}_{k\in\mathbb{N}}$ with $I_k(\varsigma_k)\rightarrow+\infty$ and $S_k(\varsigma_k)\geq\varepsilon$ as $k\rightarrow+\infty$ in $\mathbb{R}$ for all $k\in\mathbb{N}$. Let $\tilde{c}>0$ be the lower bound of $\{c_k\}$ and we have
\[
S'_k(\varsigma)\leq \frac{2S_0+\Lambda}{\tilde{c}} := \delta_0\ \ \textrm{in}\ \ \mathbb{R}.
\]
We further denote $\delta = \frac{\varepsilon}{\delta_0}$, and we have
\[
S_k(\varsigma)\geq\frac{\varepsilon}{2},\ \ \ \forall\varsigma\in[\varsigma_k-\delta,\varsigma_k]\ \ \textrm{and}\ \ \forall k\in\mathbb{N}.
\]
Thanks to Claim II, there exists some $C_0>0$ such that
\[
\frac{ I_k(\varsigma_k)}{ I_k(\varsigma)} = \exp\left\{\int_{\varsigma}^{\varsigma_k}\frac{ I'_k(\sigma)}{ I_k(\sigma)}\d \sigma\right\}\leq e^{C_0\delta},\ \ \forall\varsigma\in[\varsigma_k-\delta, \varsigma_k]
\]
for all $k\in\mathbb{N}$. Thus
\[
\min_{\varsigma\in[\varsigma_k-\delta,\ \varsigma_k]} I_k(\varsigma)\geq e^{-C_0\delta} I_k(\varsigma_k),
\]
which give us
\[
\min_{\varsigma\in[\varsigma_k-\delta,\ \varsigma_k]} I_k(\varsigma) \rightarrow +\infty\ \ \textrm{as}\ \ k\rightarrow+\infty
\]
since $ I_k(\varsigma_k)\rightarrow+\infty$ as $k\rightarrow+\infty$.
Recalling the first equation of (\ref{WaveEqu}), and denote $\varsigma_k:=\min\limits_{\varsigma\in[\varsigma_k-\delta, \varsigma_k]} I_k(\varsigma)$, one can have
\[
\max_{\varsigma\in[\varsigma_k-\delta, \varsigma_k]}S'_k(\varsigma)\leq \delta_0 - \frac{\beta_1\varepsilon}{2}I_k(\varsigma_k)\rightarrow-\infty\ \ \textrm{as}\ \ k\rightarrow+\infty.
\]
Tnen,
\[
S'_k(\varsigma)\leq - \frac{2S_0}{\delta},\ \ \forall k\geq K\ \ \textrm{and}\ \ \varsigma\in[\varsigma_k-\delta, \varsigma_k].
\]
for some $K>0$.
Thus, we have $S_k(\varsigma_k)\leq-S_0$, $\forall k\geq K$, which is a contradiction. 
Similarly, we can show that $ V_k(\varsigma_k)\rightarrow0$ as $k\rightarrow+\infty$.

\textbf{Claim IV}. If $\limsup\limits_{\varsigma\rightarrow+\infty} I(\varsigma)=+\infty$, then $\lim\limits_{\varsigma\rightarrow+\infty} I(\varsigma)=+\infty$.

With a similar arguments in \cite[Lemma 3.4]{ChenGuoHamelNon2017}, we know that Claim IV is true.
We are now in position to show the boundedness of $I(\varsigma)$ by using Claim I-IV.

\begin{lemma}\label{lem5}
$I(\varsigma)$ is bounded in $\mathbb{R}$.
\end{lemma}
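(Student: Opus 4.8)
The plan is to argue by contradiction: assume $I$ is unbounded on $\mathbb{R}$ and rule this out using Claims I--IV together with the recruitment balance encoded in the three equations of (\ref{WaveEqu}). First I would pin down where a blow-up could occur. From Lemma \ref{lem1} and the squeeze $I^-\le I\le I^+=e^{\mathfrak{r}_1\varsigma}$, $S^-\le S\le S_0$, $V^-\le V\le V_0$ obtained in Section \ref{Sec:Existence}, the profile satisfies $(S,V,I)\to(S_0,V_0,0)$ as $\varsigma\to-\infty$, so $I$ is bounded near $-\infty$. Hence unboundedness forces $\limsup_{\varsigma\to+\infty}I(\varsigma)=+\infty$, and Claim IV upgrades this to $\lim_{\varsigma\to+\infty}I(\varsigma)=+\infty$. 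Applying Claim III to an arbitrary sequence $\varsigma_k\to+\infty$ then yields $S(\varsigma)\to0$ and $V(\varsigma)\to0$ as $\varsigma\to+\infty$. Consequently there is $\varsigma_1$ with $\beta_1 S(\varsigma)+\beta_2 V(\varsigma)-\mu_3\le-\mu_3/2$ for all $\varsigma\ge\varsigma_1$, so on $[\varsigma_1,\infty)$ the $I$-profile solves a linear nonlocal equation whose reaction coefficient is bounded above by a fixed negative constant.

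Next I would exploit the population balance. Adding the three equations of (\ref{WaveEqu}) cancels the bilinear terms $\pm\beta_1 SI$ and $\pm\beta_2 VI$, leaving, with $W:=S+V+I$,
\[
cW'(\varsigma)=\digamma[S](\varsigma)+\digamma[V](\varsigma)+d\,\digamma[I](\varsigma)+\Lambda-\mu S(\varsigma)-\mu_2 V(\varsigma)-\mu_3 I(\varsigma).
\]
Because $\mu_1=\alpha+\mu$, the relations $\mu_1 S_0=\Lambda$ and $\mu_2 V_0=\alpha S_0$ give $\Lambda-\mu S_0-\mu_2 V_0=0$, so the reaction part vanishes at the left state and the improper integral from $-\infty$ is well defined. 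Integrating over $(-\infty,T]$ and noting that $\int_{-\infty}^{T}\digamma[S]$ and $\int_{-\infty}^{T}\digamma[V]$ telescope to bounded boundary fluxes (since $S,V$ are bounded), I expect to reach an inequality of the form
\[
\mu_3\int_{-\infty}^{T}I(\varsigma)\,\d\varsigma\ \le\ d\Big(\int_{T}^{T+1}I-\int_{T-1}^{T}I\Big)+\Lambda T+C
\]
for some constant $C$ and all large $T$, the term $\Lambda T$ coming from $\Lambda-\mu S-\mu_2 V\to\Lambda$ at $+\infty$.

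The crux is then to control the discrete-diffusion flux $\int_{T}^{T+1}I-\int_{T-1}^{T}I$ at the moving right endpoint, which carries the large values of $I$ and has no a priori sign. Here Claims I and II are essential: the bounds on $I(\varsigma\pm1)/I(\varsigma)$ and on $I'(\varsigma)/I(\varsigma)$ show that $I$ varies by at most a fixed factor over unit intervals, which both makes the flux comparable to $I(T)$ and lets me compare the asymptotic growth rate of $I$ with the roots of $\Delta_\infty(\lambda):=d(e^{\lambda}+e^{-\lambda}-2)-c\lambda-\mu_3$. Since $I\le e^{\mathfrak{r}_1\varsigma}$ while $\mathfrak{r}_1$ is strictly below the positive root of $\Delta_\infty$, a positive solution of the limiting equation on $[\varsigma_1,\infty)$ cannot sustain growth at a rate this slow against the negative reaction coefficient $-\mu_3/2$ (equivalently, once the coefficient is negative the equation forces $\digamma[I]\ge(\mu_3/2d)I$, whose only admissible behaviour compatible with the cap $e^{\mathfrak{r}_1\varsigma}$ is decay). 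Thus $\int_{-\infty}^{T}I$ can grow at most linearly in $T$, whereas $I(\varsigma)\to+\infty$ forces it to grow super-linearly.

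I expect this final step to be the main obstacle: taming the nonlocal flux term and turning the integrated balance into a genuine contradiction. The difficulty is precisely that the differing diffusion rate $d$ prevents $W$ from satisfying a clean scalar comparison equation, so the large-$\varsigma$ behaviour of $I$ must instead be controlled through the ratio and log-derivative estimates of Claims I--II and the characteristic function $\Delta_\infty$. This is the "boundedness of TWS" difficulty flagged in the introduction, and it is exactly where the preparatory Claims I--IV do their work; once the contradiction is obtained, $I$ is bounded on $\mathbb{R}$.
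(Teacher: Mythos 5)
Your reduction is the same as the paper's: assume unboundedness, note it can only happen at $+\infty$, use Claim IV to get $I(\varsigma)\to+\infty$ and Claim III to get $(S,V)\to(0,0)$ as $\varsigma\to+\infty$. But the mechanism you propose for the actual contradiction does not close, and the step you wave at in its place is the one that needs proof. The integrated mass balance for $W=S+V+I$ gives, after using Claim I, an inequality in which both the dissipation term $\mu_3\int_{-\infty}^{T}I$ and the nonlocal flux $d\bigl(\int_{T}^{T+1}I-\int_{T-1}^{T}I\bigr)$ are comparable to $I(T)$ with constants you do not control (Claim I only bounds $I(\varsigma\pm1)/I(\varsigma)$ by a large constant of the form $4(c/d)^4e^{3\kappa}$, in the wrong direction for your purpose), so no contradiction with the linear term $\Lambda T$ emerges. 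Your fallback assertion --- that a positive solution of the limiting equation with reaction coefficient $\le-\mu_3/2$ ``cannot sustain growth'' below the positive characteristic root, or that $\digamma[I]\ge(\mu_3/2d)I$ forces decay --- is exactly the nontrivial point, and it is not true as a pointwise comparison statement: that discrete convexity inequality by itself does not force decay.

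What the paper actually does, and what is missing from your proposal, is to pass to the logarithmic derivative $\theta(\varsigma)=I'(\varsigma)/I(\varsigma)$, which by Claims I--II is bounded and satisfies the closed functional equation
\begin{equation*}
c\,\theta(\varsigma)=d\,e^{\int_{\varsigma}^{\varsigma+1}\theta(s)\,\mathrm{d}s}+d\,e^{\int_{\varsigma}^{\varsigma-1}\theta(s)\,\mathrm{d}s}-(2d+\mu_3)+\beta_1S(\varsigma)+\beta_2V(\varsigma).
\end{equation*}
Since $\beta_1S+\beta_2V\to0$, the persistence lemma of Chen--Guo (\cite[Lemma 3.4]{ChenGuoMA2003}) guarantees that $\theta$ has a finite limit $\omega$ at $+\infty$ which must be a root of $\Upsilon(\kappa,c)=d(e^{\kappa}+e^{-\kappa}-2)-c\kappa-\mu_3$; because $I\to+\infty$ rules out the negative root, $\omega=\kappa_0$, the unique positive root. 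Lemma \ref{WaveSpeed} gives $\Upsilon(\mathfrak{r}_2,c)<0$, hence $\mathfrak{r}_1<\mathfrak{r}_2<\kappa_0$, so $I(\varsigma)\ge Ce^{\frac{\mathfrak{r}_2+\kappa_0}{2}\varsigma}$ for large $\varsigma$, contradicting the super-solution bound $I\le e^{\mathfrak{r}_1\varsigma}$. This existence-of-the-limit step is the content you would need to supply (or cite) to make your ``characteristic function'' paragraph into a proof; without it, the argument has a genuine gap.
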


\begin{proof}
Suppose that $\limsup\limits_{\varsigma+\rightarrow\infty} I(\varsigma)=+\infty$, then$\lim\limits_{\varsigma\rightarrow+\infty}(S(\varsigma),V(\varsigma))=(0,0)$.
Denote $\theta(\varsigma)=\frac{I'(\varsigma)}{ I(\varsigma)}$, we have
\[
c\theta(\varsigma) = d e^{\int_{\varsigma}^{\varsigma+1}\theta(s)\d s} + d e^{\int_{\varsigma}^{\varsigma-1}\theta(s)\d s} - (2+\mu_3) + \beta_1S(\varsigma)+\beta_2 V(\varsigma),
\]
By using \cite[Lemma 3.4]{ChenGuoMA2003},
the finite limit of $\theta(\varsigma)$ at $+\infty$ exists and denoted by $\omega$, which is satisfying
\[
\Upsilon(\kappa,c) := d\left(e^\kappa + e^{-\kappa} - 2\right) -c\kappa - \mu_3 = 0.
\]
Clearly, $\Upsilon(\kappa,c) = 0$ has a unique positive real root $\kappa_0$.
From Lemma \ref{WaveSpeed}, we have
\[
d\left(e^{\mathfrak{r}_2} + e^{-\mathfrak{r}_2} - 2\right) -c\mathfrak{r}_2 - \mu_3 < 0,
\]
Recall the definition of $\mathfrak{r}_1$ and $\mathfrak{r}_2$, we have $\mathfrak{r}_2<\kappa_0$. Since $\lim\limits_{\varsigma\rightarrow+\infty}\theta(\varsigma) = \kappa_0$, then there exists $\tilde{\varsigma}$ such that
\[
 I(\varsigma)\geq C e{\left(\frac{\mathfrak{r}_2+\kappa_0}{2}\right)\varsigma}\ \ {\rm for}\ \ {\rm all}\ \ \varsigma\geq\tilde{\varsigma},
\]
with some constant $C$, which contradicts with $ I(\varsigma)\leq e^{\mathfrak{r}_1\varsigma}$ in $\mathbb{R}$ and $\mathfrak{r}_1<\kappa_0$.
The proof is done.
\end{proof}

The following lemma is to show that $I(\varsigma)$ cannot approach $0$.
\begin{lemma}\label{lem6}
There holds $\liminf\limits_{\varsigma\rightarrow+\infty} I(\varsigma)>0$.
\end{lemma}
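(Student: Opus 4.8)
The plan is to argue by contradiction: suppose $\liminf_{\varsigma\to+\infty}I(\varsigma)=0$. The first task is to produce, far out on the axis, a window on which the whole profile is pinned near the disease-free state $E_0$. Since the hypothesis gives $\inf_{[T,+\infty)}I=0$ for every $T$, an elementary selection (minimizing $I$ over intervals of growing radius and re-centering) yields points $\varsigma_k\to+\infty$ with $I(\varsigma_k)\to0$ and $I(\varsigma_k)=\min_{[\varsigma_k-k,\,\varsigma_k+k]}I$. The point of minimizing over widening windows, rather than taking an arbitrary near-infimal point, is that it tames any oscillation of $I$: it makes $\varsigma_k$ a genuine minimum with definite spatial extent, which is exactly what the nonlocal operator $\digamma$ will see.

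Next I would pass to a translation limit. By Lemma~\ref{lem1} and Lemma~\ref{lem5} the fields $S,V,I$ are uniformly bounded, so the right-hand sides of (\ref{WaveEqu}) are bounded and the translates $(S,V,I)(\varsigma_k+\cdot)$ are precompact in $C^{1}_{loc}(\mathbb{R})$ (cf. the a priori estimates of Lemma~\ref{LemC}). A subsequential limit $(\bar S,\bar V,\bar I)$ solves (\ref{WaveEqu}) on $\mathbb{R}$ with $\bar I\ge0$ and $\bar I(0)=\lim_k I(\varsigma_k)=0$. Since $0$ is then a global minimum of $\bar I$, evaluating the third equation at $\varsigma=0$ (where $\bar I'(0)=0$ and $\bar I(0)=0$) forces $d[\bar I(1)+\bar I(-1)]=0$, hence $\bar I(\pm1)=0$; rewriting the $\bar I$-equation as $c\bar I'+p\bar I=d[\bar I(\varsigma+1)+\bar I(\varsigma-1)]\ge0$ and using the Duhamel representation on unit intervals propagates the zero, giving $\bar I\equiv0$. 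With $\bar I\equiv0$ the $\bar S,\bar V$ equations decouple; applying a maximum-principle argument to $S_0-\bar S\ge0$ and $V_0-\bar V\ge0$ (evaluate at a maximizing translation limit, where $\digamma[\cdot]\le0$) shows each supremum is nonpositive, whence $\bar S\equiv S_0$ and $\bar V\equiv V_0$. In particular $(S,V)(\varsigma_k+\cdot)\to(S_0,V_0)$ locally uniformly.

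The contradiction then comes from the instability encoded in $\Re_0>1$. Renormalize $J_k(\varsigma):=I(\varsigma_k+\varsigma)/I(\varsigma_k)$. The selection gives $J_k\ge1$ on $[-k,k]$ with $J_k(0)=1$, while Claim~I and Claim~II bound $J_k$ and its logarithmic derivative on compacta; hence $J_k\to J$ in $C^1_{loc}$ with $J\ge1$ and $J(0)=1$, so $\varsigma=0$ is a global minimum of $J$. Dividing the $I$-equation by $I(\varsigma_k)$ and letting $k\to\infty$, using $(S,V)(\varsigma_k+\cdot)\to(S_0,V_0)$ to pass to the limit in the coefficient and Claim~I to pass to the limit in the nonlocal terms $J_k(\varsigma\pm1)$, the limit $J$ solves the linearization at $E_0$, namely $cJ'=d\digamma[J]+(\beta_1S_0+\beta_2V_0-\mu_3)J$ with $\beta_1S_0+\beta_2V_0-\mu_3=\mu_3(\Re_0-1)>0$. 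Evaluating at the global minimum, where $J'(0)=0$ and $\digamma[J](0)=J(1)+J(-1)-2J(0)\ge0$, yields $0=cJ'(0)=d\digamma[J](0)+\mu_3(\Re_0-1)J(0)\ge\mu_3(\Re_0-1)>0$, a contradiction. Hence $\liminf_{\varsigma\to+\infty}I(\varsigma)>0$.

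The step I expect to be the main obstacle is the translation-limit analysis, i.e. converting the single scalar fact $\bar I(0)=0$ first into $\bar I\equiv0$ and then into $(\bar S,\bar V)\equiv(S_0,V_0)$, carried out against the nonlocal coupling $\digamma$: because the discrete ``Laplacian'' reaches to $\varsigma\pm1$, a pointwise minimum does not make it sign-definite by itself, which is precisely why the Duhamel propagation on unit intervals and the maximum-principle argument at translation limits are needed. A second delicate point is the selection of $\varsigma_k$ as a minimizer over windows whose radius tends to infinity, since this is what guarantees that the normalized limit $J$ attains a \emph{global} minimum at $0$ and hence that $\digamma[J](0)\ge0$. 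Once the disease-free window and the normalized limit $J$ are in hand, the strict positivity $\mu_3(\Re_0-1)>0$ closes the argument cleanly.
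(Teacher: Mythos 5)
Your overall strategy (argue by contradiction, pass to a translation limit pinned at the disease-free state, renormalize by $I(\varsigma_k)$, and extract a contradiction from the linearization at $E_0$ using $\Re_0>1$) matches the paper's up to the final step, but the way you close the argument rests on a selection that does not exist in general, and this is a genuine gap rather than a technicality. You need points $\varsigma_k\to+\infty$ with $I(\varsigma_k)\to0$ and $I(\varsigma_k)=\min_{[\varsigma_k-k,\,\varsigma_k+k]}I$, so that the normalized limit $J$ attains a global minimum at $0$ with $J'(0)=0$ and $\digamma[J](0)\ge0$. But one of the scenarios the lemma must rule out is precisely that $I$ tends to $0$ eventually monotonically as $\varsigma\to+\infty$; in that case, for every $\varsigma$ in the decreasing range the minimum of $I$ over $[\varsigma-k,\varsigma+k]$ is attained at the right endpoint, never at the center, so no such re-centered minimizers exist. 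If you fall back on boundary minimizers, the limit $J$ satisfies $J\ge1$ only on $(-\infty,0]$, $J'(0)$ is only $\le0$, and $\digamma[J](0)=J(1)+J(-1)-2$ can be negative, so the concluding inequality $0=cJ'(0)=d\,\digamma[J](0)+\mu_3(\Re_0-1)\ge\mu_3(\Re_0-1)>0$ is not available. (In the genuinely oscillatory case your selection can be repaired, but that is exactly the case in which the conclusion is least in doubt.)

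The paper avoids this by selecting only points with $I(\varsigma_k)\to0$ and $I'(\varsigma_k)\le0$, which always exist when $\liminf_{\varsigma\to+\infty}I(\varsigma)=0$ (the proof reduces to showing: $I\le\varepsilon_0$ small implies $I'>0$). The contradiction is then not a pointwise maximum-principle evaluation but a spectral one: the logarithmic derivative $P=\pi_\infty'/\pi_\infty$ of the normalized limit has finite limits at $\pm\infty$ by \cite[Lemma 3.4]{ChenGuoMA2003}, and these limits must be real roots of $\Delta(\cdot,c)=0$; since $\Re_0>1$ gives $\Delta(0,c)>0$ and $\partial_{\mathfrak{r}}\Delta<0$ for $\mathfrak{r}\le0$, every real root is positive, and a further argument excluding an interior infimum of $P$ yields $P>0$ on all of $\mathbb{R}$, hence $\pi_\infty'(0)>0$, contradicting $I'(\varsigma_k)\le0$. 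This replacement of ``evaluate at a global minimum'' by ``the limit profile must be strictly increasing because every admissible exponential rate is positive'' is the ingredient your proposal is missing. The remaining parts of your outline (strict positivity of the limit $J$, propagation of zeros through the nonlocal operator on unit intervals, and the identification $(\bar S,\bar V)\equiv(S_0,V_0)$) are consistent with the paper's treatment.
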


\begin{proof}
We only need to show that if $ I(\varsigma)\leq\varepsilon_0$ for $\varepsilon_0>0$ is small enough, then $ I'(\varsigma)>0$ for all $\varsigma\in \mathbb{R}$.
If not, we can choose a sequence $\{\varsigma_k\}_{k\in\mathbb{N}}$ with $c_k\in(a,b)$ so that $ I(\varsigma_k)\rightarrow0$ as $k\rightarrow+\infty$ and $ I'(\varsigma_k)\leq0$, where $a$ and $b$ are two positive constants. Let
\[
S_k(\varsigma):= S(\varsigma_k+\varsigma),\ \  V_k(\varsigma):=  V(\varsigma_k+\varsigma)\ \ \textrm{and}\ \  I_k(\varsigma):=  I(\varsigma_k+\varsigma),
\]
then, $I_k(0)\rightarrow0$, $I_k(\varsigma)\rightarrow0$ and $I_k'(\varsigma)\rightarrow0$ locally uniformly in $\mathbb{R}$ as $k\rightarrow+\infty$,
and we can obtain that $S_\infty = S_0$ and $ V_\infty = V_0$ by a similar proof in \cite[Lemma 3.8]{ChenGuoHamelNon2017}.

Let $\pi_k(\varsigma):=\frac{ I_k(\varsigma)}{ I_k(0)}$, we have 
\[
\pi_k'(\varsigma) = \frac{ I_k'(\varsigma)}{ I_k(0)} = \frac{ I_k'(\varsigma)}{ I_k(\varsigma)}\pi_k(\varsigma).
\]
Hence, $\pi_k(\varsigma)$ and $\pi_k'(\varsigma)$ also locally uniformly in $\mathbb{R}$ as $k\rightarrow+\infty$. Hence
\[
c_\infty\pi_\infty'(\varsigma)= d \digamma[\pi_\infty](\varsigma) + (\beta_1 S_0+\beta_2 V_0)\pi_\infty(\varsigma) - \mu_3 \pi_\infty(\varsigma)
\]
as $k\rightarrow+\infty$.
One can have $\pi_\infty(\varsigma)>0$ in $\mathbb{R}$. Indeed, if there is a $\varsigma_0$ such that $\pi_\infty(\varsigma_0)=0$, then ${\pi}'_\infty(\varsigma_0)=0$ and
then
\[
0 = d(\pi_\infty(\varsigma_0+1)+\pi_\infty(\varsigma_0-1)).
\]
Thus $\pi_\infty(\varsigma_0+1) = \pi_\infty(\varsigma_0-1) = 0$, it follows that $\pi_\infty(\varsigma_0+o) = 0$ for all $o\in\mathbb{Z}$. Recall that
$c_\infty\pi_\infty'(\varsigma) \geq  - \mu_3 \pi_\infty(\varsigma)$, then the map $\varsigma\mapsto \pi_\infty(\varsigma) e^{\frac{\mu_3\varsigma}{c_\infty}}$ is nondecreasing. Since it vanishes at $\varsigma_0+o$ for all $m\in\mathbb{Z}$, one can conclude that $\pi_\infty = 0$ in $\mathbb{R}$, which is a contradiction with $\pi_\infty(0) = 1$.

Denote $P(\varsigma):=\frac{\pi_\infty'(\varsigma)}{\pi_\infty(\varsigma)}$, one has that
\begin{equation}\label{Z}
c_\infty P(\varsigma)= d e^{\int^{\varsigma+1}_\varsigma P(s){\rm d} s}\d y + d e^{\int^{\varsigma-1}_\varsigma P(s){\rm d} s}\d y - 2 + \beta_1 S_0+\beta_2 V_0 - \mu_3.
\emph{}\end{equation}
Using \cite[Lemma 3.4]{ChenGuoMA2003}, $P(\varsigma)$ has finite limits $\omega_{\pm}$ and satisfy
\[
c_\infty \omega_\pm = d\left(e^{\omega_\pm} + e^{-\omega_\pm} -2\right) + \beta_1 S_0+\beta_2 V_0 - \mu_3.
\]
By Lemma \ref{WaveSpeed}, we know have $\omega_\pm>0$ and $\pi_\infty'(\pm\infty)$ are positive.
Moreover, one can have $\pi_\infty'(\varsigma)>0$ for all $\varsigma\in\mathbb{R}$. In fact, if there exists some $\varsigma^*$ such that $P(\varsigma^*) = \inf_{\mathbb{R}}P(\varsigma)$,
then $P(\varsigma^*) = 0$. Differentiating (\ref{Z}) yields
\[
c_\infty P'(\varsigma) = d(P(\varsigma+1) - P(\varsigma))\frac{\pi_\infty(\varsigma+1)}{\pi_\infty(\varsigma)} + d(P(\varsigma-1) - P(\varsigma))\frac{\pi_\infty(\varsigma-1)}{\pi_\infty(\varsigma)}.
\]
It follows that
\[
P(\varsigma^*) = P(\varsigma^*+1) = P(\varsigma^*-1).
\]
Hence $P(\varsigma^*) = P(\varsigma^*+\kappa)$ for all $\kappa\in\mathbb{Z}$. Then,
\[
\inf_{\mathbb{R}}P(\varsigma)\geq\min\{P(+\infty),P(-\infty)\}>0.
\]
Furthermore,
\[
0<\pi_\infty'(0) = \lim_{k\rightarrow+\infty}\pi_k'(0) = \lim_{k\rightarrow+\infty}\frac{ I_k'(0)}{ I_k(0)}.
\]
Thus, $ I'(\varsigma_k) =  I_k'(0)>0$, which is a contradiction. 
\end{proof}

\section{Convergence of the traveling wave solution}\label{Sec:Lyapunov}

In this section, we show the convergence of traveling wave solutions.
\begin{theorem}\label{theorem2}
If $\Re_0 > 1$, then for each $c > \mathfrak{c}^*$, system (\ref{Model}) has a TWS $(S(\varsigma), V(\varsigma), I(\varsigma))$ satisfying conditions (\ref{Bound1}) and (\ref{Bound2}).
\end{theorem}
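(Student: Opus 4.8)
The plan is to take the entire bounded solution $(S,V,I)$ of the wave system (\ref{WaveEqu}) already produced at the end of Section~\ref{Sec:Existence} (obtained by letting $\mathfrak{B}\to\infty$ in the truncated problems (\ref{TruPro}) and extracting a locally convergent subsequence via the $C^{1,1}_{\mathrm{loc}}$ estimates of Lemma~\ref{LemC}), and to verify that it realizes the two asymptotic boundary conditions (\ref{Bound1}) and (\ref{Bound2}). By Lemma~\ref{lem1} this solution satisfies $0<S<S_0$, $0<V<V_0$, $I>0$; by Lemma~\ref{lem5} $I$ is bounded; and by Lemma~\ref{lem6} it persists, $\liminf_{\varsigma\to+\infty}I(\varsigma)>0$. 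Hence only the limits at $\pm\infty$ remain.

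The limit at $-\infty$ is immediate from the sub- and super-solutions of Lemma~\ref{UpLow}. Indeed $S^-(\varsigma)\le S(\varsigma)\le S^+(\varsigma)=S_0$ with $S^-(\varsigma)=S_0(1-M_1e^{\varepsilon_1\varsigma})\to S_0$ as $\varsigma\to-\infty$, and likewise $V^-(\varsigma)\to V_0$; while $0<I(\varsigma)\le I^+(\varsigma)=e^{\mathfrak{r}_1\varsigma}\to 0$. The squeeze theorem then gives $(S,V,I)(\varsigma)\to(S_0,V_0,0)$ as $\varsigma\to-\infty$, which is (\ref{Bound1}).

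The limit at $+\infty$ is the heart of the matter and is handled with a Lyapunov functional. Writing $g(x)=x-1-\ln x$ (so $g\ge 0$, with equality iff $x=1$), I would take the pointwise part
\[
\mathcal{U}(\varsigma)=S^*g\!\left(\tfrac{S}{S^*}\right)+k_1 V^*g\!\left(\tfrac{V}{V^*}\right)+k_2 I^*g\!\left(\tfrac{I}{I^*}\right),
\]
with positive weights $k_1,k_2$ fixed from the equilibrium relations (\ref{EE}), and augment it by integral correction terms $\mathcal{J}(\varsigma)$ built from $g$-type integrands over unit shifts, designed so that upon differentiation they cancel the nonlocal contributions of $\digamma[S]$, $\digamma[V]$, $\digamma[I]$ up to sign-definite remainders. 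Differentiating $\mathcal{L}=c\,\mathcal{U}+\mathcal{J}$ along (\ref{WaveEqu}) and eliminating $\Lambda$, $\alpha S^*$, $\mu_3$ through (\ref{EE}), the local reaction part produces the usual contributions $-\mu_1(S-S^*)^2/S-\dots$ together with $g$-expressions, while $\mathcal{J}'$ absorbs the shift terms, so that $\mathcal{L}'(\varsigma)\le 0$, with equality only at $E^*$.

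The decisive points, and the main obstacles, are twofold. First, the correction $\mathcal{J}$ must be engineered so that the discrete second differences telescope into nonpositive $g$-terms (the asymmetric vaccination term $\alpha S$ in the $V$-equation makes the weight bookkeeping here delicate). Second, $\mathcal{L}$ must be bounded below, and this is exactly where persistence enters: the upper bound on $I$ (Lemma~\ref{lem5}), the lower bound $\liminf_{\varsigma\to+\infty}I>0$ (Lemma~\ref{lem6}), and the analogous lower bounds $\liminf_{\varsigma\to+\infty}S>0$, $\liminf_{\varsigma\to+\infty}V>0$, which follow by a persistence argument in the spirit of Lemma~\ref{lem6} using the constant recruitment $\Lambda>0$, confine all arguments of $g$ to a compact subset of $(0,\infty)$ for large $\varsigma$, keeping $\mathcal{L}$ finite and bounded below. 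With $\mathcal{L}'\le 0$ and $\mathcal{L}$ bounded below, $\mathcal{L}(\varsigma)$ converges and $\int^{+\infty}(-\mathcal{L}')<\infty$; since the integrand is uniformly continuous (the derivatives $S'$, $V'$, $I'$ are bounded by the estimates underlying Lemma~\ref{LemC}), Barbalat's lemma forces the sign-definite terms to tend to $0$, whence $(S,V,I)(\varsigma)\to(S^*,V^*,I^*)$ as $\varsigma\to+\infty$. This yields (\ref{Bound2}) and completes the proof.
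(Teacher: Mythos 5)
Your proposal follows essentially the same route as the paper: the same Lyapunov functional built from $g(x)=x-1-\ln x$ with a pointwise part weighted by the equilibrium values plus integral correction terms over unit shifts that absorb the discrete second differences $\digamma[\cdot]$, with the boundedness and persistence lemmas supplying the lower bound for the functional. The only (inessential) deviation is the concluding step, where you invoke Barbalat's lemma while the paper uses translated sequences, Arzel\`a--Ascoli and a LaSalle-type identification of the limit; both are standard, though in either version one must still return to the wave equations to pin down $I\to I^*$, since no sign-definite term in $\mathcal{V}'$ directly controls $I-I^*$.
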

\begin{proof}
In what following, we use $(S,V,I)$ short for $(S(\varsigma),V(\varsigma),I(\varsigma))$.
Define the following four functionals
\[
W_1(\varsigma) = c S^* g\left(\frac{S}{S^*}\right) + c V^* g\left(\frac{V}{V^*}\right) + c I^* g\left(\frac{I}{I^*}\right),
\]
\[
W_2(\varsigma) = \int_0^1 g\left(\frac{S(\varsigma-\sigma)}{S^*}\right)\d \sigma - \int_{-1}^0 g\left(\frac{S(\varsigma-\sigma)}{S^*}\right)\d \sigma
\]
\[
W_3(\varsigma) = \int_0^1 g\left(\frac{ V(\varsigma-\sigma)}{V^*}\right)\d \sigma - \int_{-1}^0 g\left(\frac{ V(\varsigma-\sigma)}{V^*}\right)\d \sigma
\]
and
\[
W_4(\varsigma) = \int_0^1 g\left(\frac{ I(\varsigma-\sigma)}{I^*}\right)\d \sigma - \int_{-1}^0 g\left(\frac{ I(\varsigma-\sigma)}{I^*}\right)\d \sigma,
\]
where $g(x)=x-1-\ln x$.
The derivative of $W_1(\varsigma)$ is calculated as follows
\[
\frac{\d W_1(\varsigma)}{\d \varsigma} = \left(1-\frac{S^*}{S}\right) \digamma[S](\varsigma) + \left(1-\frac{V^*}{ V}\right) \digamma[ V](\varsigma) + \left(1-\frac{I^*}{ I}\right) d\digamma[ I](\varsigma) + \Sigma(\varsigma),
\]
where
\begin{align}\label{Theta}
\nonumber\Sigma(\varsigma) = & \left(1-\frac{S^*}{S}\right) \left(\Lambda - \mu_1 S - \beta_1 SI\right) + \left(1-\frac{V^*}{S}\right) \left(\alpha S - \beta_2  VI - \mu_2  V\right)\\
& + \left(1-\frac{I^*}{ I}\right) \left((\beta_1 S + \beta_2  V)I - \mu_3  I\right).
\end{align}
Since $(S^*,I^*,V^*)$ is the endemic equilibrium of system (\ref{Model}) and $\mu_1 = \mu + \alpha$,
one has
\begin{align*}
\Sigma(\varsigma) = &\ \mu S^* \left(2-\frac{S^*}{S}-\frac{S}{S^*}\right) + \mu_2 V^* \left(3-\frac{S^*}{S}-\frac{V}{V^*}-\frac{SV^*}{S^* V}\right)\\
& -\beta_1 S^* I^*\left[g\left(\frac{S^*}{S}\right) + g\left(\frac{S}{S^*}\right)\right] -\beta_2 V^* I^*\left[g\left(\frac{S^*}{S}\right) + g\left(\frac{SV^*}{S^* V}\right) + g\left(\frac{V}{V^*}\right)\right].
\end{align*}
Furthermore,
\begin{align*}
\frac{\d W_2(\varsigma)}{\d \varsigma} = &\frac{\d}{\d \varsigma} \left[\int_0^1 g\left(\frac{S(\varsigma-\sigma)}{S^*}\right)\d \sigma - \int_{-1}^0 g\left(\frac{S(\varsigma-\sigma)}{S^*}\right)\d \sigma\right]\\
= &  \int_0^1 \frac{\d}{\d \varsigma}g\left(\frac{S(\varsigma-\sigma)}{S^*}\right)\d \sigma - \int_{-1}^0 \frac{\d}{\d \varsigma}g\left(\frac{S(\varsigma-\sigma)}{S^*}\right)\d \sigma\\
= & - \int_0^1 \frac{\d}{\d \sigma}g\left(\frac{S(\varsigma-\sigma)}{S^*}\right)\d \sigma + \int_{-1}^0 \frac{\d}{\d \sigma}g\left(\frac{S(\varsigma-\sigma)}{S^*}\right)\d \sigma\\
= & 2 g\left(\frac{S}{S^*}\right) - g\left(\frac{S(\varsigma-1)}{S^*}\right) - g\left(\frac{S(\varsigma+1)}{S^*}\right).
\end{align*}
Similarly,
\[
\frac{\d W_3(\varsigma)}{\d \varsigma} = 2 g\left(\frac{ V}{V^*}\right) - g\left(\frac{ V(\varsigma-1)}{V^*}\right) - g\left(\frac{ V(\varsigma+1)}{V^*}\right)
\]
and
\[
\frac{\d W_4(\varsigma)}{\d \varsigma} = 2 g\left(\frac{ I}{I^*}\right) - g\left(\frac{ I(\varsigma-1)}{I^*}\right) - g\left(\frac{ I(\varsigma+1)}{I^*}\right).
\]
Now, we define a Lyapunov functional as
\[
\mathcal{V}(\varsigma) = W_1(\varsigma) + S^* W_2(\varsigma) + V^* W_3(\varsigma) + d I^* W_4(\varsigma),
\]
and
\begin{align*}
\frac{\d \mathcal{V}(\varsigma)}{\d \varsigma}
= &\mu S^* \left(2-\frac{S^*}{S}-\frac{S}{S^*}\right) + \mu_2 V^* \left(3-\frac{S^*}{S}-\frac{ V}{V^*}-\frac{SV^*}{S^* V}\right)\\
& -\beta_1 S^* I^*\left[g\left(\frac{S^*}{S}\right) + g\left(\frac{S}{S^*}\right)\right] -\beta_2 V^* I^*\left[g\left(\frac{S^*}{S}\right) + g\left(\frac{SV^*}{S^* V}\right) + g\left(\frac{V}{V^*}\right)\right]\\
&- S^* \left[g\left(\frac{S(\varsigma-1)}{S}\right) + g\left(\frac{S(\varsigma+1)}{S}\right)\right] - V^* \left[g\left(\frac{ V(\varsigma-1)}{ V}\right) + g\left(\frac{ V(\varsigma+1)}{ V}\right)\right]\\
& - dI^* \left[g\left(\frac{ I(\varsigma-1)}{ I}\right) + g\left(\frac{ I(\varsigma+1)}{ I}\right)\right].
\end{align*}
Recall that $g(x)\geq0$. Hence, the map $\varsigma\mapsto \mathcal{V}(\varsigma)$ is non-increasing. Choosing $\{\varsigma_k\}_{k\geq 0}$ as an increasing sequence with $\varsigma_k>0$ and $\varsigma_k\rightarrow+\infty$ as $k\rightarrow+\infty$. Let
$$\{S_k(\varsigma)=S(\varsigma+\varsigma_k)\}_{k\geq 0},\ \ \{ V_k(\varsigma)= V(\varsigma+\varsigma_k)\}_{k\geq 0}\ \ \textrm{and}\ \ \{ I_k(\varsigma)= I(\varsigma+\varsigma_k)\}_{k\geq 0}.$$
Since $S$, $V$ and $I$ have bounded derivatives then the sequences of functions $\{S_k(\varsigma)\}$, $\{V_k(\varsigma)\}$ and $\{I_k(\varsigma)\}$ converge in $C_{loc}^{\infty}(\mathbb{R})$ as $k\rightarrow+\infty$ by Arzela-Ascoli theorem, up to extraction of a subsequence,
we assume that the sequences $\{S_k(\varsigma)\}$, $\{V_k(\varsigma)\}$ and $\{I_k(\varsigma)\}$ convergence to some nonnegative $C^\infty$ functions $S_\infty$, $ V_\infty$ and $ I_\infty.$ Since $\mathcal{V}(S, V, I)(\varsigma)$ is bounded from below,
then there exists $M_0$ and some large $k$ such that
\[
M_0\leq \mathcal{V}(S_k, V_k, I_k)(\varsigma)=\mathcal{V}(S, V, I)(\varsigma+\varsigma_k)\leq \mathcal{V}(S, V, I)(\varsigma).
\]
Hence, there exists some $\delta\in \mathbb{R}$ such that $\lim\limits_{k\rightarrow\infty} \mathcal{V}(S_k, V_k, I_k)(\varsigma)=\delta$, $\forall\varsigma\in \mathbb{R}$. Using Lebegue dominated convergence theorem, one has that
\[
\lim_{k\rightarrow+\infty}\mathcal{V}(S_k, V_k, I_k)(\varsigma)=\mathcal{V}(S_\infty, V_\infty, I_\infty)(\varsigma),\ \varsigma\in \mathbb{R}.
\]
Thus
\[
\mathcal{V}(S_\infty, V_\infty, I_\infty)(\varsigma)=\delta.
\]
Recall that $\frac{\d \mathcal{V}}{\d \varsigma}=0$ if and only if $S\equiv S^*$, $ V\equiv V^*$ and $ I\equiv I^*$, which finishes the proof.
\end{proof}


\begin{remark}
For the case $c=\mathfrak{c}^*$, we can obtain the existence of TWS by using a similar approximation technique used in \cite[Section 4]{ChenGuoHamelNon2017}. The TWS for $c=\mathfrak{c}^*$ is also satisfy \eqref{Bound1} and \eqref{Bound2} since the Lyapunov functional is independent of $c$.
\end{remark}

\section{Discussion}\label{Sec:Dis}
In this paper, we proposed a discrete diffusive vaccination epidemic model (i.e., system (\ref{Model})), which seems to be more realistic than the non-delayed model (1.2). Employing Schauder’s fixed point theorem and Lyapunov functional, we obtain the existence of nontrivial positive TWS, which is connecting two different equilibrium. Our research examines the conditions (i.e. basic reproduction number) under which an infectious disease can spread, even this disease has a vaccine.

Now we finish this section with some explanations from the perspective of epidemiology.
Assume that $(\hat{\mathfrak{r}},\hat{c})$ is a root of $\Delta(\mathfrak{r},c)=0$, by some calculations, we obtain
\[
\frac{\d \hat{c}}{\d \gamma_1} <0,\ \ \frac{\d \hat{c}}{\d d}  > 0,\
\frac{\d \hat{c}}{\d \beta_1} >0,\ \ \frac{\d \hat{c}}{\d \beta_2} >0\ \ {\rm and}\ \ \frac{\d \hat{c}}{\d \Re_0} >0.
\]
Mathematically, $\hat{c}$ is a decreasing on $\gamma_1$, while $\hat{c}$ is an increasing function on $d$, $\beta_1$ and $\beta_2$. From the biological point of view, this indicates the following three scenarios:
\begin{description}
  \item[I.] The more successful the vaccination, the slower the disease spreads;
  \item[II.] The faster the infected individuals move, the faster the disease spreads;
  \item[III.] The more effective the infections are, the faster the disease spreads.
\end{description}
Accordingly, a good understanding of the movement of the infected individuals and the vaccination rate of susceptible individuals could be important in disease control strategy.
In fact, as in the ordinary differential equation case in \cite{LiuTakeuchiShingoJTB2008}, the basic reproduction number $\Re_0$ is decreasing on $\gamma_1$, while $\Re_0$ is increasing on $\beta_1$ and $\beta_2$. Compared with \cite{LiuTakeuchiShingoJTB2008}, our study proposes a new explanation, which is to control the movement of the infected individuals.
Another important thing is the effectiveness of vaccination $\gamma_1$ is important than the vaccination rate $\alpha$, which explain the importance of complete vaccination.

In addition, as we know, numerical simulation can help us to see the influence of model parameters on the qualitative analysis of TWS. However, the topic of the present paper is focus on the mathematical theory analysis for the model, and we leave numerical simulation as our future work to fit some specific diseases.

\section*{Acknowledgements}
The authors would like to thank Professor Shigui Ruan from University of Miami for his suggestions.


\begin{thebibliography}{99}
\bibitem{LiuTakeuchiShingoJTB2008}
X. Liu, Y. Takeuchi and S. Iwami, \emph{SVIR epidemic models with vaccination strategies}. J. Theoret. Biol., 2008, 253, 1--11.

\bibitem{KuniyaNARWA2013}
T. Kuniya, \emph{Global stability of a multi-group SVIR epidemic model}, Nonlinear Anal. Real World Appl., 2013, 14, 1135--1143.

\bibitem{DuanYuanLiAMC2014}
X. Duan, S. Yuan and X. Li, \emph{Global stability of an SVIR model with age of vaccination}, Appl. Math. Comput., 2014, 226, 528--540.

\bibitem{WangZhangKuniyaIMAJAM2016}
J. Wang, R. Zhang and T. Kuniya, \emph{The dynamics of an SVIR epidemiological model with infection age}, IMA J. Appl. Math., 2016, 81, 321--343.

\bibitem{WangGuoLiuIMAJAM2017}
J. Wang, M. Guo and S. Liu, \emph{SVIR epidemic model with age structure in susceptibility, vaccination effects and relapse}, IMA J. Appl. Math., 2018, 82, 945--970.

\bibitem{WangWangZhangMMAS2022}
C. Wang, J. Wang and R. Zhang,  \emph{Global analysis on an age-space structured vaccination
model with Neumann boundary condition}, Math. Meth. Appl. Sci, 2022, 45, 1640--1667.

\bibitem{HuoJAAC}
H.-F. Huo, K.-D. Cao and H. Xiang, \emph{Modelling the effects of the vaccination on seasonal influenza in Gansu, China}, J. Appl. Anal. Comput., 2022, 12, 407--435.

\bibitem{LiLiYangAMC2014}
Y. Li, W.-T. Li and F.-Y. Yang, \emph{Traveling waves for a nonlocal dispersal SIR model with delay and external supplies}, Appl. Math. Comput., 2014, 247, 723--740.

\bibitem{LiLiLinCPAA2015}
Y. Li, W.-T. Li and G. Lin, \emph{Traveling waves of a delayed diffusive SIR epidemic model}, Commun. Pure Appl. Anal., 2015, 14, 1001--1022.

\bibitem{Hosono}
Y. Hosono and B. Ilyas, \emph{Traveling waves for a simple diffusive epidemic model}, Math. Models
Meth. Appl. Sci., 1995, 5, 935-–966.

\bibitem{DucrotMagalNon2011}
A. Ducrot and P. Magal, \emph{Travelling wave solutions for an infection-age structured epidemic model with external supplies}, Nonlinearity, 2011, 24, 2891--2911.

\bibitem{WangXuJAAC2014}
Z. Wang, R. Xu, \emph{Stability and traveling waves of an epidemic model with relapse and spatial diffusion}, J. Appl. Anal. Comput., 2014, 4, 307--322.

\bibitem{WangMa2017}
W. Wang and W. Ma, \emph{Travelling wave solutions for a nonlocal dispersal HIV infection dynamical model}, J. Math. Anal. Appl., 2017, 457, 868--889.

\bibitem{XuLiLinJDE2019}
W.-B. Xu, W.-T. Li and G Lin, \emph{Nonlocal dispersal cooperative systems: Acceleration propagation among species}, J. Differ. Equations, 2020, 268, 1081--1105.

\bibitem{ZhangLiuFengJinAML2021}
R. Zhang, L. Liu, X. Feng and Z. Jin, \emph{Existence of traveling wave solutions for a diffusive
tuberculosis model with fast and slow progression}, Appl. Math. Lett., 2021, 112, 106848.

\bibitem{XuXuHuangCMA2018}
Z. Xu, Y. Xu and Y. Huang, \emph{Stability and traveling waves of a vaccination model with nonlinear incidence}, Comput. Math. Appl., 2018, 75, 561--581.

\bibitem{ZhangLiuMBE2019}
R. Zhang and S. Liu, \emph{Traveling waves for SVIR epidemic model with nonlocal dispersal}, Math. Biosci. Eng., 2019, 16, 1654--1682.

\bibitem{ZhouYangHsuDCDSB2020}
J. Zhou, Y. Yang, and C.-H. Hsu,  \emph{Traveling waves for a nonlocal dispersal vaccination model
with general incidence}, Discret. Contin. Dyn. Syst. Ser. B, 2020, 25, 1469--1495.

\bibitem{SanHeCPAA2021}
X. San and Y. He, \emph{Traveling waves for a two-group epidemic model with latent period and bilinear incidence in a patchy environment}, Commun. Pur. Appl. Anal., 2021, 20, 3281--3300.







\bibitem{ChenGuoHamelNon2017}
Y.-Y. Chen, J.-S. Guo, and F. Hamel, \emph{Traveling waves for a lattice dynamical
system arising in a diffusive endemic model}, Nonlinearity, 2017, 30, 2334--2359.

\bibitem{ZhangLiuDCDSB2021}
R. Zhang and S. Liu, \emph{On the asymptotic behaviour of traveling waves for a discrete diffusive epidemic model}, Discrete Contin. Dyn. Syst. Ser. B, 2021, 26, 1197--1204.

\bibitem{ZhangWangLiuJNS2021}
R. Zhang, J. Wang and S. Liu, \emph{Traveling wave solutions for a class of discrete diffusive SIR epidemic model}, J. Nonlinear Sci., 2021, 31, 10.

\bibitem{FuGuoWuJNCA2016}
S.-C. Fu, J.-S. Guo and C.-C. Wu, \emph{Traveling wave solutions for a discrete diffusive epidemic model}, J. Nonlinear Convex Anal., 2016, 17, 1739--1751.

\bibitem{ZhangWuIJB2019}
Q. Zhang and S.-L. Wu, \emph{Wave propagation of a discrete SIR epidemic model with a saturated incidence rate}, Int. J. Biomath., 2019, 12, 1950029.

\bibitem{ZhangYuMMAS2021}
R. Zhang and X. Yu, \emph{Traveling waves for a four-compartment lattice epidemic system with exposed
class and standard incidence}, Math. Meth. Appl. Sci., 2022, 45, 113–136.













\bibitem{ChenGuoMA2003}
X. Chen and J.-S. Guo, \emph{Uniqueness and existence of traveling waves for discrete quasilinear monostable dynamics}, Math. Ann., 2003, 326, 123--146.



\end{thebibliography}
\end{document}